
\documentclass[]{interact}

\usepackage{epstopdf}
\usepackage[caption=false]{subfig}

\usepackage[numbers,sort&compress]{natbib}
\bibpunct[, ]{[}{]}{,}{n}{,}{,}

\makeatletter
\def\NAT@def@citea{\def\@citea{\NAT@separator}}
\makeatother

\usepackage{a4}
\usepackage{amsfonts}  
\usepackage{amsmath}                   
\usepackage{amsthm}
\usepackage{color}
\usepackage{amssymb}
\usepackage{enumerate}
\usepackage{soul}
\usepackage{pgfplots}
\pgfplotsset{compat=1.15}
\usepackage{mathrsfs}
\usetikzlibrary{arrows}


\DeclareMathOperator{\eco}{econv}	

\DeclareMathOperator{\conv}{conv}

\DeclareMathOperator{\cl}{cl}

\DeclareMathOperator{\epco}{\normalfont{e}^\prime conv} 	
 	
\DeclareMathOperator{\ep}{\text{e}^\prime} 	
\DeclareMathOperator{\dom}{dom}					
\DeclareMathOperator{\epi}{epi}

\newcommand{\Ramp}{\overline{\mathbb{R}}}
\newcommand{\R}{\mathbb{R}}

\newcommand{\ci}{\left\langle}		
\newcommand{\cd}{\right\rangle}

\newcommand{\xb}{\overline{x}}			
\newcommand{\yb}{\overline{y}}

\newcommand{\Db}{\overline{D}}

\newcommand{\alphab}{\overline{\alpha}}

\theoremstyle{plain}
\newtheorem{theorem}{Theorem}[section]
\newtheorem{lemma}[theorem]{Lemma}
\newtheorem{corollary}[theorem]{Corollary}
\newtheorem{proposition}[theorem]{Proposition}

\theoremstyle{definition}
\newtheorem{definition}[theorem]{Definition}
\newtheorem{example}[theorem]{Example}

\theoremstyle{remark}
\newtheorem{remark}{Remark}

\begin{document}

\articletype{ARTICLE TEMPLATE}

\title{Fenchel $c$-conjugate dual problems for DC optimization: 
characterizing weak and strong duality}

\author{
\name{M.D. Fajardo\textsuperscript{a}\thanks{CONTACT M.~D. Fajardo. Email: md.fajardo@ua.es} and J. Vidal\textsuperscript{b}}
\affil{\textsuperscript{a}Department of Mathematics, University of Alicante, Alicante, Spain; \textsuperscript{b}Department of Physics and Mathematics, University of Alcalá, Alcalá de Henares, Spain}
}

\maketitle

\begin{abstract}
This paper focuses on the development of two Fenchel-type dual problems, $(D^F)$ and $\left(\Db^F \right )$, for a given DC (difference of convex functions) optimization problem $(P)$ by means of a conjugation scheme suitable for evenly convex functions. We derive conditions characterizing not only weak duality and zero duality gap, but also strong and stable strong dualities for $(P)-(D^F)$ and $(P)-\left(\Db^F \right )$ and their relationships.
\end{abstract}

\begin{amscode}
52A20, 26B25, 90C25, 49N15.
\end{amscode}

\begin{keywords}
Generalized convex conjugation, Evenly convex function, DC problem, Fenchel duality
\end{keywords}

\section{Introduction}
\label{sec:Intro}

Conjugate duality and convex optimization have a wide range of applications solving real optimization problems. In this paper we will concentrate on \emph{DC problems}, i.e., problems whose objective function is the difference of two proper convex functions. For different applications of these problems, see \cite{HT1999,CGR2018,AT2005,TA1997} among others. For a general overview of conjugate duality, we refer the reader to \cite{B2010,Z2002} for theory on general spaces, \cite{BC2011} for a context on Hilbert spaces and \cite{R1970} for the Euclidean case.

The combination of these two areas, DC programming and conjugate duality theory, have attained considerable attention recently, not only from the theoretical point of view, but also from the practical one. For instance and among the huge literature in this topic, we mention \cite{FAY2021}, where the authors develop new regularity conditions for Fenchel duality in DC optimization problems involving composite functions; \cite{LXQ2021}, which focuses on (weak/zero/strong) dualities in DC infinite programming with inequality constraints; in the same framework, \cite{S2014} provides regularity conditions and some Farkas-type results. Quite recently, \cite{FV2022} develops some techniques based on subdifferentials via the $c$-conjugation scheme, defined in \cite{MLVP2011}, with an application into optimality conditions for DC problems, extending in this way some results on subdifferentials based on Fenchel conjugation from \cite{HU1988}.

In every approach including Fenchel conjugation to derive a dual problem for a primal one, the convexity, together with the properness and lower semicontinuity of the involved functions in the problem, makes it possible to apply conjugate duality techniques to epigraphs, which ensure strong duality results. Given a primal problem, $(P)$, and its dual, $(D)$, it is said that the pair $(P)-(D)$ verifies \emph{weak duality} when $v(P)\geq v(D)$ and \emph{strong duality} when there is zero duality gap, i.e., $v(P)=v(D)$, and the dual has an optimal solution. Connected to strong duality, \emph{stable strong duality} means that there is strong duality when the objective function of the primal problem is perturbed with a linear functional. For more details on the development of regularity conditions for strong and stable strong dualities, we encourage the reader to revise \cite{B2010,BGW2009,G2016}.

The first application of the $c$-conjugation pattern to duality theory appears in \citep{FVR2012}, building a Fenchel-type dual problem for a primal one and  obtaining conditions ensuring strong duality between them. Later, Lagrange and Fenchel-Lagrange strong dualities were studied in \citep{FVR2016} and \citep{FV2017}, respectively, whereas stable strong duality was developed in \citep{FV2016SSD}. Basically, convexity and lower semicontinuity (closedness, respectively) requirements for functions (for sets, respectively) are replaced by even convexity, a more general and weaker assumption. Evenly convex functions, which will be properly defined in the following section, enjoy a nice behaviour when are $c$-conjugated. A rich theory on optimization and further properties of this class of sets and functions can be found in the recent book \cite{FGRVP2020} and the references therein.

The aim of this paper is to study Fenchel-type duality for a DC primal one, when the dual problems are expressed in terms of the $c$-conjugated of the involved functions in the primal problem. 

The structure of the paper is as follows. First, we will provide in Section 2 the necessary results to make the paper self-contained. Then, Section 3 is devoted to the development of a suitable Fenchel dual problem whenever one of the functions is evenly convex, since weak duality is guaranteed. We also use a convexification technique to obtain an auxiliary dual problem which is related to the primal and the aforementioned dual problem if even convexity is assumed. Section 4 concentrates on characterizations for weak duality and zero duality gap for both dual problems and the primal. In Section 5 we present necessary and sufficient conditions for strong duality between the two dual pairs stated in the previous section, where the $\ep$-convexity (of certain set), a very linked notion to even convexity, will play a relevant role. Section 6 deals with the adaptation of these results to stable strong duality whereas Section 7 concludes the manuscript with a short summary as well as some open ideas left for future research.

\section{Preliminaries}
\label{sec:Pre}

In this section we include the necessary results and definitions used throughout the paper. First, we start denoting by $X$ a nontrivial separated locally convex space, briefly lcs, equipped with the topology induced by the continuous dual space $X^*$, i.e., $\sigma(X,X^*)$. With $\ci x,x^*\cd$ we represent the value of the continuous linear functional $x^*$ at $x\in X$. Given a proper subset $D\subseteq X$, $\conv D$ and $\cl D$ refer to the standard notation of its convex hull and closure, respectively.

In the previous section, we briefly mentioned the class of evenly convex functions as a class which generalizes the family of convex and lower semicontinuous functions defined on $X$. Let us elaborate more on this. Due to \cite{RVP2011}, a function is called \emph{evenly convex}, e-convex in short, if its epigraph is an e-convex set. This means that the epigraph of $f$ is the intersection of an arbitrary family (possibly empty) of open half-spaces. That was the definition that Fenchel used in \cite{F1952} when he applied even convexity to extend the polarity theory to nonclosed convex sets. E-convex sets where studied in \cite{KMZ2007} in terms of their sections and projections. Moreover, \cite{GJR2003, GR2006} introduced them as the solution sets of linear systems containing strict inequalities. Though next result is a characterization of e-convex sets, see \cite{DML2002}, we will use it as its definition, due to its tractability.

\begin{definition}[Def.~1, \cite{DML2002}]
\label{def:Econvex}
A set $C\subseteq X$ is e-convex if for every point $x_0\notin C$, there exists $x^*\in X^*$ such that $\ci x-x_0,x^*\cd<0$, for all $x\in C$.
\end{definition}

Given $C\subseteq X$, the notation $\eco C$ stands for the smallest e-convex set that contains $C$, and it is called its \emph{e-convex hull}. If in addition $C$ is convex, then
\begin{equation*}
	C\subseteq \eco C\subseteq \cl C.
\end{equation*}
This operator is closed under arbitrary intersections and thanks to Hahn-Banach theorem, every closed or open convex set is e-convex as well, but the converse is not necessarily true.

\begin{example}
\label{ex:Econvex_non_closed}
Let us take the set $C$ represented in Figure \ref{fig:Econvex_non_closed} and the point $P$. If we define $D:=C\cup \left\{P\right\}$, observe that the set $D$ is not e-convex since for every point belonging to the dashed edge, every hyperplane containing that point will intersect the set $D$. However, the set $C$ (note, without the point $P$) is e-convex according to Definition \ref{def:Econvex}, but $C$ is neither closed nor open.
\begin{figure}[h]
\centering
\definecolor{zzttqq}{rgb}{0.6,0.2,0.}
\begin{tikzpicture}[line cap=round,line join=round,>=triangle 45,x=1.0cm,y=1.0cm,scale=0.6]
\clip(-1.25,-0.68) rectangle (6.44,5.96);
\fill[line width=2.pt,color=zzttqq,fill=zzttqq,fill opacity=0.10000000149011612] (0.,0.) -- (0.,5.) -- (5.,5.) -- (5.,0.) -- cycle;
\draw [line width=1.8pt,color=zzttqq] (0.,0.)-- (0.,5.);
\draw [line width=1.8pt,color=zzttqq] (0.,5.)-- (5.,5.);
\draw [line width=1.8pt,dash pattern=on 5pt off 5pt,color=zzttqq] (5.,5.)-- (5.,0.);
\draw [line width=1.8pt,color=zzttqq] (5.,0.)-- (0.,0.);
\draw (5.3,5.64) node[anchor=north west] {$P$};
\draw (1.96,2.96) node[anchor=north west] {$C$};
\begin{scriptsize}
\draw [fill=black] (5.,5.) circle (3.5pt);
\end{scriptsize}
\end{tikzpicture}
\caption{Set in Example \ref{ex:Econvex_non_closed}}
\label{fig:Econvex_non_closed}
\end{figure}
\end{example}

In \cite{RVP2011}, the connection between e-convex sets and functions was naturally done  via the notion of epigraph. Recall that the  \emph{effective domain} and the \emph{epigraph} of a function $f:X\to\Ramp$ are, respectively,
\begin{align*}
	\dom f&:=\left\{ x\in X~:~f(x)<+\infty\right\},\\
	\epi f&:=\left\{(x,\alpha)\in X\times\R~:~f(x)\leq\alpha\right\}
\end{align*}
and it is said that a function is \emph{proper} if $f(x)>-\infty$, for all $x \in X$, and $\dom f\neq \emptyset$. A function is \emph{lower semicontinuous}, lsc in brief, if $f$ coincides with its lower semicontinuous hull, i.e., $f(x)=\cl f(x)$, being the latter the function fulfilling $\epi(\cl f)=\cl(\epi f)$. It is important to notice that if $\mathcal{E}(X)$ and $\Gamma(X)$ represent the families of real extended valued e-convex functions and lsc convex functions defined on the space $X$, respectively, then
\begin{equation*}
	\Gamma(X)\subseteq \mathcal{E}(X),
\end{equation*}
being the converse inclusion false in general as Example 2.1 from \cite{FV2017} shows.

The class of e-convex functions behaves well conjugacy-wise when they are matched with the $c$-conjugation scheme; see \cite{MLVP2011}. This pattern is based on the generalized convex conjugation theory presented by Moreau in \cite{Mor1970}, using the particular coupling functions defined as follows. Let us suppose $f:X\to\Ramp$ is a proper function and denote $W=X^*\times X^*\times \R$. The coupling function used in \emph{$c$-conjugacy} is $c:X\times W\to\Ramp$
\begin{equation*}
	c(x,(x^*,y^*,\alpha))=\left\{
	\begin{aligned}
		\ci x,x^*\cd, &~~~ \text{if } \ci x,y^*\cd<\alpha\\
		+\infty, & ~~~ \text{otherwise.}
	\end{aligned}
	\right.
\end{equation*}
The \emph{$c$-conjugate} of a function $f:X\to\Ramp$ is defined as
\begin{equation*}
	f^c(x^*,y^*,\alpha):=\sup_{X}\left\{ c(x,(x^*,y^*,\alpha))-f(x)\right\}.
\end{equation*}
Let us observe that
\begin{equation*}
	f^c(x^*,y^*,\alpha)= \left \{
	\begin{aligned}
		f^*(x^*), &~~~ \text{if } \dom f \subseteq H_{y^*,\alpha}^{-}\\
		+\infty, & ~~~ \text{otherwise,}
	\end{aligned}
	\right.
\end{equation*}
where $f^*:X^* \to \Ramp$ is the Fenchel conjugate of $f$ and 
$$H_{y^*,\alpha}^{-}:=\{x\in X : \ci x,y^*\cd < \alpha \},$$
using the standard notion (see, for instance, \cite{Z2002}) for open half-spaces in lcs.

This conjugation scheme is complemented by the use of the coupling function $c^\prime:W\times X\to\Ramp$ defined as
\begin{equation*}
	c^\prime((x^*,y^*,\alpha),x):=c(x,(x^*,y^*,\alpha)),
\end{equation*}
which allows the definition of the \emph{$c^\prime$-conjugate} of a proper function $h:W\to\Ramp$ as
\begin{equation*}
	h^{c^\prime}(x):=\sup_{W}\left\{c^{\prime}((x^*,y^*,\alpha),x)-g(x^*,y^*,\alpha)\right\}.
\end{equation*}
The sign convention applied in this conjugation pattern gives priority to $-\infty$, i.e.,
\begin{equation*}
	(+\infty)+(-\infty)=(-\infty)+(+\infty)=(+\infty)-(+\infty)=(-\infty)-(-\infty)=-\infty.
\end{equation*}
It is worth adding that, as it happens for sets, it is possible to define the largest e-convex minorant of $f$, in short its \emph{e-convex hull}, and it is represented by $\eco f$. 

In the setting of generalized convex duality theory, see \cite{Mor1970}, the functions $c(\cdot,(x^*,y^*,\alpha))-\beta :X \rightarrow \Ramp$, with $(x^*,y^*,\alpha) \in W $ and $\beta \in \R$, are called \emph{c-elementary}, and the functions $c^{\prime}(\cdot,x)-\beta : W \rightarrow \Ramp$, with $x\in X$ and $\beta \in \mathbb{R}$, are called \emph{c}$^{\prime }$-\emph{elementary}. In \cite{MLVP2011}, it is shown that any proper e-convex function $f: X \rightarrow \Ramp$ is the pointwise supremum of a set of c-elementary functions. Extending this concept, \cite{FVR2012} introduced the \emph{$\ep$-convex functions} as convex functions $g:W \rightarrow \Ramp$ which can be expressed as the pointwise supremum of a set of c$^\prime$-elementary functions. The \emph{$\ep$-convex hull} of any function $g:W\rightarrow\Ramp$, $\epco g$, is the largest $\ep$-convex function minorant of $g$. The following theorem from \cite{ML2005} is the counterpart of the Fenchel-Moreau theorem for e-convex and $\ep$-convex functions.
 
\begin{theorem}[Prop. 6.1, 6.2, Cor. 6.1,~\cite{ML2005}]
\label{thm:Theorem_charac}
Let $f:X\rightarrow \overline{\mathbb{R}}$ and $g:W\rightarrow \overline{\mathbb{R}}$. Then
\begin{itemize}
\item[(i)] $f^{c}$ is e$^{\prime }$-convex$;$ $g^{c^{\prime }}$ is e-convex.

\item[(ii)] If $f$ has a proper e-convex minorant, $\eco f=f^{cc^{\prime }}$; $ \epco g=g^{c^{\prime }c}$.

\item[(iii)] If $f$ does not take on the value $-\infty $, then $f$ is e-convex if and only if $f=f^{cc^{\prime }};g$ is e$^{\prime }$-convex if and only if $g=g^{c^{\prime }c}$.

\item[(iv)] $f^{cc^{\prime }}\leq f;$ $g^{c^{\prime }c}\leq g$. 
\end{itemize}
\end{theorem}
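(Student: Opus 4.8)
The plan is to recognise this statement as the specialisation of Moreau's abstract generalised conjugation theory (see \cite{Mor1970}) to the particular coupling $c$, and then to translate the resulting abstract convexity notions into e-convexity and $\ep$-convexity. Throughout, write $\Phi$ for the set of $c$-elementary functions $c(\cdot,w)-\beta$ on $X$ and $\Phi^\prime$ for the set of $c^\prime$-elementary functions $c^\prime(\cdot,x)-\beta$ on $W$. I would establish the four items in the order (iv), (i), and then (ii)--(iii) together, since the last two rest on the same hull machinery.

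First I would prove (iv), which is the Young--Fenchel inequality for this coupling. From the definition of $f^c$ one has $f^c(w)\geq c(x,w)-f(x)$ for every $x\in X$ and $w\in W$; rearranging gives $c(x,w)-f^c(w)\leq f(x)$, and taking the supremum over $w\in W$ yields $f^{cc^\prime}(x)\leq f(x)$. The only care needed is the sign convention giving priority to $-\infty$, which must be checked in the cases where $c(x,w)=+\infty$ or $f(x)=+\infty$. The argument for $g^{c^\prime c}\leq g$ is identical with the roles of $X$ and $W$ interchanged.

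Next I would prove (i). Writing $g^{c^\prime}(x)=\sup_{w\in W}\left[c(x,w)-g(w)\right]$ exhibits $g^{c^\prime}$ as a pointwise supremum of $c$-elementary functions $c(\cdot,w)-g(w)$. Each $c$-elementary function is e-convex, since its epigraph is the intersection of the open half-space $\{(x,r):\ci x,y^*\cd<\alpha\}$ with the closed half-space $\{(x,r):\ci x,x^*\cd-r\leq\beta\}$, and both are e-convex; as the epigraph of a supremum equals the intersection of the individual epigraphs and e-convexity is preserved under arbitrary intersections, $g^{c^\prime}$ is e-convex. Dually, $f^c=\sup_{x\in X}\left[c^\prime(\cdot,x)-f(x)\right]$ is a supremum of $c^\prime$-elementary functions; each of these is convex on $W$ (affine on the open half-space that is its domain), so $f^c$ is convex, and being a supremum of members of $\Phi^\prime$ it is $\ep$-convex by definition.

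Finally, I would treat (ii) and (iii) together through the standard hull argument of abstract conjugation. Since conjugation is order-reversing, $h\leq f$ implies $f^c\leq h^c$ and hence $h^{cc^\prime}\leq f^{cc^\prime}$. A short computation shows that every $c$-elementary function $\varphi=c(\cdot,w_0)-\beta_0$ is a fixed point of the biconjugation: indeed $\varphi^c(w_0)=\beta_0$, whence $\varphi^{cc^\prime}\geq c(\cdot,w_0)-\beta_0=\varphi$, and (iv) supplies the reverse inequality. It follows that any function expressible as a supremum of $c$-elementary functions is recovered by its biconjugate, and combined with monotonicity this identifies $f^{cc^\prime}$ as the largest such minorant of $f$. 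Invoking the characterisation of \cite{MLVP2011}, namely that the functions expressible as suprema of $c$-elementary functions are precisely the e-convex ones, gives $f^{cc^\prime}=\eco f$ in (ii) and the equivalence in (iii); the symmetric statements on $W$ follow from the very definition of $\ep$-convexity together with the analogous facts for $c^\prime$-elementary functions. I expect the genuine difficulty to reside in this identification step and in the handling of improper functions: the hypotheses ``$f$ has a proper e-convex minorant'' in (ii) and ``$f$ does not take the value $-\infty$'' in (iii) are exactly what rules out the degenerate constant $-\infty$ (arising as the supremum of the empty family) and guarantees that the abstract hull coincides with $\eco f$ rather than with an improper object.
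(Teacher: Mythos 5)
This theorem is not proved in the paper at all: it is imported verbatim as a preliminary, with the proof delegated to Propositions 6.1, 6.2 and Corollary 6.1 of \cite{ML2005}, so there is no in-paper argument to compare yours against. Your reconstruction is correct and follows exactly the route of the cited source, namely Moreau's abstract generalized conjugation \cite{Mor1970} specialised to the coupling $c$: the Young--Fenchel inequality for (iv), the sup-of-elementary-functions representation plus stability of e-convexity under intersections of epigraphs for (i), and the fixed-point/hull argument identifying $f^{cc^{\prime}}$ as the largest minorant of $f$ expressible as a supremum of $c$-elementary functions, matched with the characterisation of proper e-convex functions from \cite{MLVP2011} and the definition of $\ep$-convexity from \cite{FVR2012}, for (ii)--(iii). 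Two small points deserve explicit care beyond your flagging of improper functions: the computation $\varphi^{c}(w_0)=\beta_0$ for $\varphi=c(\cdot,w_0)-\beta_0$ requires the open half-space $H^{-}_{y_0^*,\alpha_0}$ to be nonempty (if it is empty, $\varphi\equiv+\infty$ and, under the sign convention giving priority to $-\infty$, one gets $\varphi^{c}(w_0)=-\infty$, but $\varphi$ is still a fixed point of biconjugation, so the conclusion survives); and in (ii) the identification $\eco f=f^{cc^{\prime}}$ is cleanest as a two-inequality argument, since $f^{cc^{\prime}}$ is an e-convex minorant of $f$ by (i) and (iv), giving $f^{cc^{\prime}}\leq\eco f$, while the hypothesis of a proper e-convex minorant forces $\eco f$ to be proper e-convex, hence a supremum of $c$-elementary functions, hence $\eco f=(\eco f)^{cc^{\prime}}\leq f^{cc^{\prime}}$ by monotonicity. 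With these touches your outline is a complete and faithful proof of the cited result.
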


To close this preliminary section, we add that in \cite{FVR2012} the authors also applied the notion of {e$^\prime$-convexity} to sets $D\subseteq W\times\R$, being those sets such that there exists an e$^\prime$-convex function $h:W\to\R$ verifying that $\epi h=D$.  E$^\prime$-convex sets and functions have been deeply studied and recently characterized in \cite{FV2020}. For completeness, given a set $D\subseteq W\times \R$, the smallest $\ep$-convex set containing $D$ is its \emph{$\ep$-convex hull} and it is represented by $\epco D$. The $\ep$-convex hull of a set has also been used in \cite{F2015,FVR2016,FV2018,FV2016SSD}.

\section{Dual problems}

This section is devoted to build two Fenchel-type dual problems for a given DC optimization problem 
\begin{equation}
	\label{eq:Primal_problem}
	\tag{$P$}
		\begin{aligned}
		&\inf ~ f(x)-g(x)\\
		& \text{ s.t.} ~~~x\in A,
	\end{aligned}
\end{equation}
where $f,g:X\to\Ramp$ are proper convex functions and $A \subseteq X$ is a nonempty set.
As it is usual in DC optimization, we assume as well the sign convention $(+\infty)-(+\infty)=+\infty$ so that the problem $(P)$ is well defined.

We assume that $f-g$ is proper, which means that $\dom f\subseteq \dom g$. In that case, from \cite{FVR2012}, a Fenchel dual problem for $(P)$, fulfilling weak duality, reads
\begin{equation}
	\label{eq:Dual_problem}
	\tag{$D$}
	\begin{aligned}
		\sup_{\substack{x^*,y^*\in X^*\\\alpha_1+\alpha_2>0}} \left\{ -(f-g)^c(-x^*,-y^*,\alpha_1) - \delta_A^c(x^*,y^*,\alpha_2)\right\},
	\end{aligned}
\end{equation}
where $\delta_A$ is the indicator function of the set $A$. Let us observe that, in the calculus of the optimal value of $(D)$, denoted by $v(D)$, we can restrict the analysis to those points $x^*,y^*$ and $\alpha_1+\alpha_2>0$ such that 
\begin{equation*}
	(f-g)^c(-x^*,-y^*,\alpha_1)<+\infty \text{ and } \delta_A^c(x^*,y^*,\alpha_2)<+\infty,
\end{equation*}
which implies that $\dom f \subseteq H^{-}_{-y^*,\alpha_1}$ and $A \subseteq H^{-}_{y^*,\alpha_2}$. For each pair $(\alpha_1,\alpha_2)$ fulfilling both conditions, there exists $\alpha$ big enough (and greater than zero) so that $\dom f \subseteq H^{-}_{-y^*,\alpha}$ and $A \subseteq H^{-}_{y^*,\alpha}$, and hence 
\begin{equation*}
\begin{aligned}
		\sup_{\substack{x^*,y^*\in X^*\\\alpha_1+\alpha_2>0}} & \left\{ -(f-g)^c(-x^*,-y^*,\alpha_1) - \delta_A^c(x^*,y^*,\alpha_2)\right\}\\
		&=\sup_{\substack{x^*,y^*\in X^*, \alpha_1+\alpha_2>0 \\\dom f \subseteq H^{-}_{-y^*,\alpha_1}\\ A \subseteq H^{-}_{y^*,\alpha_2}}} \left\{ -(f-g)^*(-x^*) - \delta_A^*(x^*)\right\}\\
		&=\sup_{\substack{x^*,y^*\in X^*, \alpha>0 \\\dom f \subseteq H^{-}_{-y^*,\alpha}\\ A \subseteq H^{-}_{y^*,\alpha}}} \left\{ -(f-g)^*(-x^*) - \delta_A^*(x^*)\right\} .
	\end{aligned}
\end{equation*}
It allows that $\eqref{eq:Dual_problem}$ could be rewritten as 
\begin{equation*}
	\begin{aligned}
		\sup_{Z} \left\{ -(f-g)^c(-x^*,-y^*,\alpha) - \delta_A^c(x^*,y^*,\alpha)\right\},
	\end{aligned}
\end{equation*}
where $Z:=X^* \times X^* \times \R_{++}$. Recall that, so far, we are just assuming that $f-g$ is proper. If moreover $g$ is e-convex, according to Theorem \ref{thm:Theorem_charac}, we obtain
\begin{equation*}
	(f-g)^c(-x^*,-y^*,\alpha) = \sup_{X} \left\{ c(x,(-x^*,-y^*,\alpha))-f(x)+g^{cc^\prime}(x)\right\}.
\end{equation*}
Applying the definition of $g^{cc^\prime}$, it yields
\begin{align*}
	&(f-g)^c(-x^*,-y^*,\alpha)\\
	&= \sup_{\dom g^c} \sup_X \bigg\{ c(x,(-x^*,-y^*,\alpha)) - [f(x)-c(x,(u^*,v^*,\gamma))] - g^c(u^*,v^*,\gamma) \biggr\},
\end{align*}
or, equivalently,
\begin{align*}
	&(f-g)^c(-x^*,-y^*,\alpha)\\
	& = \sup_{\dom g^c} \left\{ \Bigl(f-c(\cdot,(u^*,v^*,\gamma))\Bigr)^c(-x^*,-y^*,\alpha) - g^c(u^*,v^*,\gamma) \right\}.
\end{align*}
Now, recall that $\dom f\subseteq \dom g$, hence for every $(u^*,v^*,\gamma)\in\dom g^c$, it holds $\dom f\subseteq H_{v^*,\gamma}^-$, so $f-c(\cdot,(u^*,v^*,\gamma)$ will be a proper function, whose effective domain is $\dom f$, allowing us to write
$$\Bigl(f-c(\cdot,(u^*,v^*,\gamma))\Bigr)^c(-x^*,-y^*,\alpha)=f^c(u^*-x^*,-y^*,\alpha).$$
Finally, we have
\begin{equation*}
	(f-g)^c(-x^*,-y^*,\alpha) = \sup_{\dom g^c} \left\{ f^c(u^*-x^*,-y^*,\alpha) - g^c(u^*,v^*,\gamma) \right\},
\end{equation*}
and the formulation of its Fenchel dual problem will be
\begin{equation}
	\label{eq:Dual_problem_final}
	\tag{$D^F$}
	\begin{aligned}
		\sup_{Z} ~\inf_{\dom g^c} \left\{ g^c(u^*,v^*,\gamma)-f^c(u^*-x^*,-y^*,\alpha) - \delta_A^c(x^*,y^*,\alpha)\right\}.
	\end{aligned}
\end{equation}
It is worth mentioning that the supremum in \eqref{eq:Dual_problem_final} is taken on $\dom \delta_A^c$ indeed and, on the other hand, that if the function $g$ is e-convex, weak duality holds by construction, that is $v(P)\geq v(D^F)$. However, as the following example shows, that is not the case in general.

\begin{example}
\label{ex:Example_g_non_econvex_and_WD}
Let $A= [0,+\infty[$ and $f,g:\R\to\Ramp$ defined as
\begin{equation*}
	f(x)=\left\{
	\begin{aligned}
		x,&~~~x\geq 0,\\
		+\infty,&~~~\text{otherwise;}
	\end{aligned}
	\right.
	\hspace{0.75cm}
	\text{and}
	\hspace{0.75cm}
	g(x)=\left\{
	\begin{aligned}
		1,&~~~ x=0,\\
		x,&~~~x> 0,\\
		+\infty,&~~~\text{otherwise.}
	\end{aligned}	
	\right.
\end{equation*}
\begin{center}
\begin{figure}[h]
\centering
\definecolor{ffffff}{rgb}{1.,1.,1.}
\definecolor{ccqqww}{rgb}{0.8,0.,0.4}
\definecolor{zzttqq}{rgb}{0.6,0.2,0.}
\definecolor{uuuuuu}{rgb}{0.26666666666666666,0.26666666666666666,0.26666666666666666}
\begin{tikzpicture}[line cap=round,line join=round,>=triangle 45,x=1.0cm,y=1.0cm,scale=0.75]
\clip(-0.75,-0.94) rectangle (12.2,5.5);
\fill[line width=3.2pt,color=zzttqq,fill=zzttqq,fill opacity=0.10000000149011612] (0.,0.) -- (0.,5.) -- (5.,5.) -- cycle;
\fill[line width=2.pt,color=zzttqq,fill=zzttqq,fill opacity=0.10000000149011612] (7.,2.) -- (7.,5.) -- (12.,5.) -- (7.,0.) -- cycle;
\draw [->,line width=1.pt,dash pattern=on 2pt off 2pt] (0.,0.) -- (0.,5.);
\draw [->,line width=1.pt,dash pattern=on 2pt off 2pt] (0.,0.) -- (5.,0.);
\draw [->,line width=1.pt,dash pattern=on 2pt off 2pt] (7.,0.) -- (7.,5.);
\draw [->,line width=1.pt,dash pattern=on 2pt off 2pt] (7.,0.) -- (12.,0.);
\draw [line width=1.5pt] (0.,0.)-- (5.,5.);
\draw [line width=1.5pt,color=zzttqq] (0.,0.)-- (0.,5.);
\draw [line width=1.5pt,color=zzttqq] (5.,5.)-- (0.,0.);
\draw (0.98,4.) node[anchor=north west] {$epi f$};
\draw [line width=1.5pt,color=zzttqq] (7.,2.)-- (7.,5.);
\draw [line width=1.5pt,color=zzttqq] (12.,5.)-- (7.,0.);
\draw [line width=0.4pt,color=ffffff] (7.,0.)-- (7.,2.);
\draw (8.12,4.04) node[anchor=north west] {$epi g$};
\begin{scriptsize}
\draw [fill=uuuuuu] (0.,0.) circle (2.0pt);
\draw [fill=zzttqq] (7.,2.) circle (2.5pt);
\end{scriptsize}
\end{tikzpicture}
\caption{Epigraphs of the functions $f$ and $g$ in Example \ref{ex:Example_g_non_econvex_and_WD}}
\label{fig:Example_g_noneconvex}
\end{figure}
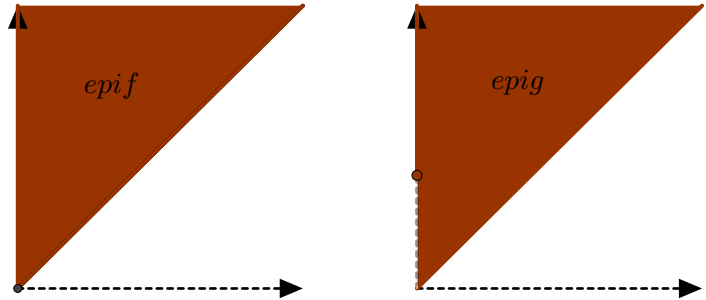
\end{center}
According to Figure \ref{fig:Example_g_noneconvex} and using Definition \ref{def:Econvex}, it is immediate to observe that $f$ is e-convex but $g$ is not. A simple computation shows that $v(P)=-1$. However, if $\alpha>0$, it is not hard to see that 
\begin{align*}
	v \left(D^F \right) &\geq \inf_{\dom g^c} \left\{ g^c(u^*,v^*,\gamma) - f^c(u^*,0,\alpha) - \delta_A^c(0,0,\alpha) \right\}\\
	&= \inf_{\dom g^c} \left\{ 0 - 0 - 0 \right\} =0>-1=v(P).
\end{align*}
\end{example}

\subsection{E-convexification technique to get another dual problem}
\label{subsec:Econvexification}

The purpose of this subsection is to develop another dual problem for $(P)$ whose optimal value will be placed between $v(P)$ and $v \left(D^F \right)$ in the case $g$ is e-convex. Due to the equality $g=g^{cc'}$, assuming that $g$ is e-convex, we will have
\begin{align*}
	v(P)&=\inf_X\left\{ f(x)-\sup_{\dom g^c}\left\{ c(x,(u^*,v^*,\gamma))-g^c(u^*,v^*,\gamma)\right\} +\delta_A(x)\right\}\\
	&= \inf_{\dom g^c} \left\{ \inf_X \left\{ f(x)-c(x,(u^*,v^*,\gamma))+\delta_A(x)\right\} +g^c(u^*,v^*,\gamma)\right\}.
\end{align*}
For every $(u^*,v^*,\gamma)\in\dom g^c$, we define the problem
\begin{equation*}
	\tag{$P^{(u^*,v^*,\gamma)}$}
	\inf_{X} \left\{ f(x)-c(x,(u^*,v^*,\gamma))+\delta_A(x) + g^c(u^*,v^*,\gamma)\right\},
\end{equation*}
which, in the case that the function $f-c(\cdot,(u^*,v^*,\gamma))$ is proper, meaning that $\dom f\subseteq H_{v^*,\gamma}^-$, it is possible to derive a Fenchel dual problem for it (\citep{FVR2012}), reading as
\begin{equation*}
	\tag {$D^{(u^*,v^*,\gamma)}$}
	\sup_{Z} \biggl\{ -\Bigl(f-c(\cdot,(u^*,v^*,\gamma))+g^c(u^*,v^*,\gamma)\Bigr)^c(-x^*,-y^*,\alpha)\\
	 -\delta_A^c(x^*,y^*,\alpha)\biggr\}.
\end{equation*}
Hence, setting $\dom f\subseteq\dom g$, which implies that $\dom f\subseteq H_{v^*,\gamma}^-$ for all $(u^*,v^*,\gamma)\in\dom g^c$, we have
\begin{equation*}
	v(P)=\inf_{\dom g^c} v \left(P^{(u^*,v^*,\gamma)} \right) \geq \inf_{\dom g^c} v\left(D^{(u^*,v^*,\gamma)}\right) = v \left(\Db^F \right),
\end{equation*}
where $ \left(\Db^F \right )$ is the problem
\begin{equation*}
\inf_{\dom g^c} \sup_{Z} \biggl\{-\Bigl(f-c(\cdot,(u^*,v^*,\gamma))+g^c(u^*,v^*,\gamma)\Bigr)^c(-x^*,-y^*,\alpha)\\
	 -\delta_A^c(x^*,y^*,\alpha)\biggr\},
\end{equation*}
fulfilling weak duality. Since we are assuming that $\dom f\subseteq \dom g$, for every $(u^*,v^*,\gamma)\in\dom g^c$ we obtain
\begin{equation*}
\begin{aligned}
	\Bigl(f-c(\cdot,(u^*,v^*,\gamma))&+g^c(u^*,v^*,\gamma)\Bigr)^c(-x^*,-y^*,\alpha)\\
	&= f^c(u^*-x^*,-y^*,\alpha)-g^c(u^*,v^*,\gamma),
\end{aligned}
\end{equation*}
so, equivalently, the dual problem reads as
\begin{equation}\label{eq:dualproblemecon}
	\tag {$\Db^{F}$}
	\inf_{\dom g^c} \sup_{Z} \left\{g^c(u^*,v^*,\gamma)-f^c(u^*-x^*,-y^*,\alpha)-\delta_A^c(x^*,y^*,\alpha)\right\}.
\end{equation}
If $g$ is e-convex, it yields
\begin{equation}
	\label{eq:Optimal_value_ineq}
	v(P)\geq v \left (\Db^F \right ) \geq v\left(D^F \right)
\end{equation}
and, in particular, there exists weak duality between $(P)-\left (\Db^F \right)$. Let us observe that Example \ref{ex:Example_g_non_econvex_and_WD} also serves to show that when the function $g$ is not e-convex, weak duality between $(P)- \left (\Db^F \right )$ is not guaranteed.

\section{Conditions for weak duality and zero duality gap in DC problems}

Example \ref{ex:Example_g_non_econvex_and_WD} shows that when the function $g$ in \eqref{eq:Primal_problem} is not e-convex, it may happen that weak duality fails for the dual pairs $(P)-\left(D^F \right)$ and  $(P)-\left(\Db^F \right)$. For this reason, let us start analysing conditions under which weak duality is guaranteed. We define the sets
\begin{equation}
\label{eq:Set_Omega}
\Omega := \bigcup_{\dom\delta_A^c} \bigcap_{\dom g^c} \Biggl\{ \epi\Bigl(f-c(\cdot,(-x^*,-y^*,\alpha)\Bigr)^c
	-(u^*,0,0,g^c(u^*,v^*,\gamma)-\delta_A^c(x^*,y^*,\alpha))\Biggr\}
\end{equation}
and 
\begin{equation}
	\label{eq:Set_K}
	K:= \bigcap_{\dom g^c}\bigcup_{\dom \delta_A^c} \Biggl\{ \epi\Bigl(f-c(\cdot,(-x^*,-y^*,\alpha)\Bigr)^c
	-(u^*,0,0,g^c(u^*,v^*,\gamma)-\delta_A^c(x^*,y^*,\alpha))
	\Biggr\}.
	\end{equation}

Recalling that $v(P)=\inf_{x\in X} \{f(x)-g(x)+\delta_A(x)\}$, we present, in first place, a very useful lemma, as we will see.
\begin{lemma}
\label{lem:Lemma1}
Let $\beta\in\R$. The following statements hold:
\begin{itemize}
	\item[i)] There exists $\delta>0$ such that $(0,0,\delta,\beta)\in\epi(f-g+\delta_A)^c$ if and only if $v(P)\geq-\beta$.
	\item[ii)] There exists $\delta>0$ such that $(0,0,\delta,\beta)\in\Omega$ if and only there exists $(x^*,y^*,\alpha) \in \dom \delta_A^c$ such that, for all $(u^*,v^*,\gamma)\in\dom g^c$,
	$$g^c(u^*,v^*,\gamma)-f^c(u^*-x^*,-y^*,\alpha)-\delta_A^c(x^*,y^*,\alpha)\geq -\beta.$$
	\item[iii)]  There exists $\delta>0$ such that $(0,0,\delta,\beta)\in K$, if and only if, for all $(x^*,y^*,\alpha) \in \dom \delta_A^c$, there exists $(u^*,v^*,\gamma)\in\dom g^c$ such that
	$$g^c(u^*,v^*,\gamma)-f^c(u^*-x^*,-y^*,\alpha)-\delta_A^c(x^*,y^*,\alpha)\geq -\beta.$$
\end{itemize}
\end{lemma}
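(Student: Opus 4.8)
The plan is to translate each set-membership assertion into an inequality between $c$-conjugate values, and the whole argument rests on a single computation that I would carry out first. Namely, for every $\delta>0$ and every triple $(x^*,y^*,\alpha)$ and $(u^*,v^*,\gamma)$ I would establish the identity
\begin{equation*}
\Bigl(f-c(\cdot,(-x^*,-y^*,\alpha))\Bigr)^c(u^*,0,\delta)=f^c(u^*-x^*,-y^*,\alpha).
\end{equation*}
To prove it I would go back to the definition of the $c$-conjugate. Since $\delta>0$, the coupling $c(x,(u^*,0,\delta))$ equals $\ci x,u^*\cd$ for \emph{every} $x$ (because $\ci x,0\cd=0<\delta$), so the defining supremum is $\sup_X\{\ci x,u^*\cd-f(x)+c(x,(-x^*,-y^*,\alpha))\}$. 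If $\dom f\subseteq H^{-}_{-y^*,\alpha}$, then on $\dom f$ the coupling reduces to $-\ci x,x^*\cd$, the perturbed function $f-c(\cdot,(-x^*,-y^*,\alpha))$ is proper with effective domain $\dom f$, and the supremum collapses to $f^*(u^*-x^*)=f^c(u^*-x^*,-y^*,\alpha)$; if the inclusion fails, both sides equal $+\infty$. The delicate point I would check explicitly is the interplay of the two sign conventions: the difference $f-c(\cdot,(-x^*,-y^*,\alpha))$ must be formed with the DC convention $(+\infty)-(+\infty)=+\infty$ (so that points outside $\dom f$ do not generate a spurious $-\infty$ and inflate the conjugate), whereas the conjugate itself obeys the $-\infty$-priority convention.

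For item (i) I would simply compute $(f-g+\delta_A)^c(0,0,\delta)$. For $\delta>0$ the half-space $H^{-}_{0,\delta}$ is all of $X$, so the domain condition in the closed form of the $c$-conjugate is automatic and $(f-g+\delta_A)^c(0,0,\delta)=(f-g+\delta_A)^*(0)=-\inf_X(f-g+\delta_A)=-v(P)$, independently of the choice of $\delta>0$. Hence $(0,0,\delta,\beta)\in\epi(f-g+\delta_A)^c$ for some (equivalently, every) $\delta>0$ if and only if $-v(P)\le\beta$, i.e. $v(P)\ge-\beta$, which settles (i).

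For items (ii) and (iii) I would unwind the set operations into logical quantifiers. Writing $\theta:=g^c(u^*,v^*,\gamma)-\delta_A^c(x^*,y^*,\alpha)$, the point $(0,0,\delta,\beta)$ lies in the shifted set $\epi\bigl(f-c(\cdot,(-x^*,-y^*,\alpha))\bigr)^c-(u^*,0,0,\theta)$ if and only if $(u^*,0,\delta,\beta+\theta)\in\epi(\cdots)^c$, that is $\bigl(f-c(\cdot,(-x^*,-y^*,\alpha))\bigr)^c(u^*,0,\delta)\le\beta+\theta$. By the displayed identity this reads $f^c(u^*-x^*,-y^*,\alpha)\le\beta+g^c(u^*,v^*,\gamma)-\delta_A^c(x^*,y^*,\alpha)$, which is exactly the inequality in the statement with $-\beta$ on the right. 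Since this condition does not depend on $\delta>0$, the quantifier ``there exists $\delta>0$'' collapses. It then remains only to read off the quantifier prefix dictated by the nesting of $\bigcup$ and $\bigcap$: for $\Omega=\bigcup_{\dom\delta_A^c}\bigcap_{\dom g^c}$ membership means ``there is $(x^*,y^*,\alpha)\in\dom\delta_A^c$ such that for all $(u^*,v^*,\gamma)\in\dom g^c$'' the inequality holds, which is precisely (ii); reading the nesting of $K$ as the corresponding ``for all\,/\,there exists'' prefix over the respective index sets yields (iii).

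The main obstacle is the identity above together with the careful bookkeeping of the two competing sign conventions; once it is in place, items (i)--(iii) reduce to matching $\exists/\forall$ against $\bigcup/\bigcap$. I would pay particular attention to the degenerate case $\dom f\not\subseteq H^{-}_{-y^*,\alpha}$, verifying there that the perturbed conjugate is $+\infty$ so that the associated inequality fails and the membership equivalences remain valid, and I would confirm that the $\alpha>0$ entries forced by restricting to $Z$ and $\dom\delta_A^c$ are compatible with the choices $\delta>0$ used throughout.
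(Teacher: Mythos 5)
Your proposal is correct and takes essentially the same route as the paper's proof: both reduce membership in $\epi(f-g+\delta_A)^c$, $\Omega$ and $K$ to the single conjugate inequality $f^c(u^*-x^*,-y^*,\alpha)\le\beta+g^c(u^*,v^*,\gamma)-\delta_A^c(x^*,y^*,\alpha)$ via the identity $\bigl(f-c(\cdot,(-x^*,-y^*,\alpha))\bigr)^c(u^*,0,\delta)=f^c(u^*-x^*,-y^*,\alpha)$, and then match the $\exists/\forall$ prefix against the $\bigcup/\bigcap$ structure of $\Omega$ and $K$. Your extra bookkeeping (the two sign conventions, the degenerate case $\dom f\not\subseteq H^{-}_{-y^*,\alpha}$, and the independence of the condition from $\delta>0$) only makes explicit what the paper justifies in Section~3 and records in Remark~\ref{remark:forall}.
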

\begin{proof}
$i)$ For $\delta>0$, the point $(0,0,\delta,\beta)\in\epi(f-g+\delta_A)^c$ if and only if, for all $x\in X$, the inequality
\begin{equation*}
	c(x,(0,0,\delta))-f(x)+g(x)-\delta_A(x)\leq \beta
\end{equation*}
holds. That is equivalent to claim that, for all $x\in X$, it holds
\begin{equation*}
	f(x)-g(x)+\delta_A(x)\geq -\beta,
\end{equation*}
which means that 
\begin{equation*}
	\inf_{x\in X} \left\{f(x)-g(x)+\delta_A(x)\right\} \geq -\beta,
\end{equation*}
and, hence, $v(P)\geq -\beta$.

$ii)$ From the definition of the set $\Omega$ in (\ref{eq:Set_Omega}), $(0,0,\delta,\beta)\in\Omega$, $\delta >0$, if and only if there exists $(x^*,y^*,\alpha) \in \dom \delta_A^c$ such that, for all $(u^*,v^*,\gamma)\in\dom g^c$, 
$$(0,0,\delta,\beta)+(u^*,0,0,g^c(u^*,v^*,\gamma)-\delta_A^c(x^*,y^*,\alpha)) \in \epi \Big(f-c(\cdot,(-x^*,-y^*,\alpha))\Bigr)^c.$$
This is equivalent to the existence of $(x^*,y^*,\alpha) \in \dom \delta_A^c$ such that, for all $(u^*,v^*,\gamma)\in\dom g^c$,
$$\Bigl(f-c(\cdot,(-x^*,-y^*,\alpha))\Bigr)^c(u^*, 0, \delta) \leq \beta + g^c(u^*,v^*,\gamma)-\delta_A^c(x^*,y^*,\alpha),$$
which can also be written as
\begin{equation}\label{eq:ineqlema41}
	f^c(u^*-x^*,-y^*,\alpha)\leq \beta +g^c(u^*,v^*,\gamma)-\delta_A^c(x^*,y^*,\alpha).
\end{equation}
Then $(0,0,\delta,\beta)\in\Omega$, $\delta >0$,  if and only if there exists $(x^*,y^*,\alpha) \in \dom \delta_A^c$ such that, for all $(u^*,v^*,\gamma)\in\dom g^c$,  
\begin{equation*}
	g^c(u^*,v^*,\gamma)-f^c(u^*-x^*,-y^*,\alpha)-\delta_A^c(x^*,y^*\alpha)\geq -\beta.
\end{equation*}
 The proof of $iii)$ is similar.
\end{proof}


\begin{remark}\label{remark:forall}
In the  previous lemma, it is easy to see that the existence of $\delta >0$ in  $i), ii)$ and $iii)$ verifying the required condition in each case, is equivalent to say that for all $\delta >0$ the required condition is fulfilled. 
\end{remark}
Next proposition shows necessary and sufficient conditions for weak duality in a general case. Before we state it, we introduce the set
\begin{equation}
	\label{eq:Set_B}
	B:=\left\{ (0,0)\times \R_{++}\times \R\right\} \subseteq W \times \R.
\end{equation}
\begin{proposition}
\label{prop:NC_for_WD}
Let $\Omega$, $K$ and $B$ be the sets defined in \eqref{eq:Set_Omega}, \eqref{eq:Set_K} and \eqref{eq:Set_B}, respectively. Then:
\begin{itemize}
\item [i)]$v(P)\geq v\left(D^F \right)$ if and only if $\Omega\cap B \subseteq \epi(f-g+\delta_A)^c\cap B$.

\item[ii)] $v(P)\geq v \left(\Db^F \right)$ if and only if $K\cap B\subseteq \epi(f-g+\delta_A)^c\cap B$.
\end{itemize}

\end{proposition}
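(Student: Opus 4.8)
The plan is to reduce each set inclusion to a one-parameter family of scalar implications and then to recognise that family as the desired inequality between optimal values, with Lemma~\ref{lem:Lemma1} and Remark~\ref{remark:forall} doing the translation. First I would note that, because $B=\{(0,0)\}\times\R_{++}\times\R$, a point of $\Omega\cap B$ (or of $K\cap B$) automatically lies in $B$, so the inclusion $\Omega\cap B\subseteq\epi(f-g+\delta_A)^c\cap B$ is the same as $\Omega\cap B\subseteq\epi(f-g+\delta_A)^c$: for every $\delta>0$ and every $\beta\in\R$, membership $(0,0,\delta,\beta)\in\Omega$ must force $(0,0,\delta,\beta)\in\epi(f-g+\delta_A)^c$. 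By Remark~\ref{remark:forall}, for a fixed $\beta$ neither membership depends on the chosen value of $\delta>0$, so the inclusion collapses to a condition indexed by $\beta$ alone, and likewise for $K$.

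Writing
\[
\Phi(x^*,y^*,\alpha;u^*,v^*,\gamma):=g^c(u^*,v^*,\gamma)-f^c(u^*-x^*,-y^*,\alpha)-\delta_A^c(x^*,y^*,\alpha),
\]
so that $v(D^F)=\sup_{\dom\delta_A^c}\inf_{\dom g^c}\Phi$ and $v(\Db^F)=\inf_{\dom g^c}\sup_{\dom\delta_A^c}\Phi$, I would apply Lemma~\ref{lem:Lemma1}. Part~(i) turns the membership in $\epi(f-g+\delta_A)^c$ into the inequality $v(P)\geq-\beta$. For i), part~(ii) turns $(0,0,\delta,\beta)\in\Omega$ into the existence of $(x^*,y^*,\alpha)\in\dom\delta_A^c$ with $\inf_{\dom g^c}\Phi\geq-\beta$; for ii), part~(iii) turns $(0,0,\delta,\beta)\in K$ into the condition that for each $(u^*,v^*,\gamma)\in\dom g^c$ there is $(x^*,y^*,\alpha)\in\dom\delta_A^c$ with $\Phi\geq-\beta$ (the reversed order of $\inf$ and $\sup$ relative to the $\Omega$-case, matching $v(\Db^F)$). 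Each reduced inclusion is thus an implication ``[dual membership condition at level $\beta$] $\Rightarrow v(P)\geq-\beta$'', required to hold for all $\beta\in\R$.

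The core is then a single scalar lemma, applied twice. Let $p:=v(P)$, let $d$ denote the dual value ($v(D^F)$ in i), $v(\Db^F)$ in ii)), and let $W(\beta)$ be the corresponding dual membership condition. In both cases one verifies two monotone links: first $W(\beta)\Rightarrow d\geq-\beta$, obtained by passing from the concrete witnesses in $W(\beta)$ to the supremum and infimum that define $d$; second $-\beta<d\Rightarrow W(\beta)$, because a level strictly below the supremum (resp.\ infimum) governing $d$ is realised by admissible witnesses. Granting these, ``$W(\beta)\Rightarrow p\geq-\beta$ for all $\beta$'' is equivalent to $p\geq d$: if $p\geq d$, then $W(\beta)$ gives $-\beta\leq d\leq p$; conversely, if $p<d$, I would pick a real $r$ with $p<r<d$, set $\beta=-r$, use $-\beta<d$ to obtain $W(\beta)$, and derive the contradiction $p\geq-\beta=r>p$.

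I expect the one genuinely delicate point to be the boundary case $-\beta=d$, where the supremum defining $v(D^F)$ or the infimum defining $v(\Db^F)$ need not be attained, so that $W(\beta)$ may fail even though $d\geq-\beta$ holds. This is exactly why the argument never operates at $-\beta=d$ but squeezes a strict separation $p<r<d$ between the two values, and why the whole reduction is phrased through the mere existence of some $\delta>0$ (Remark~\ref{remark:forall}) rather than through attainment of a dual optimum. The extended-real cases $d=\pm\infty$ and $p=-\infty$ are covered by the same squeezing argument, up to a one-line check.
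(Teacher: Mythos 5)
Your proposal is correct and takes essentially the same route as the paper's own proof: both use Lemma~\ref{lem:Lemma1} together with Remark~\ref{remark:forall} to translate membership of $(0,0,\delta,\beta)$ in $\Omega\cap B$, $K\cap B$ and $\epi(f-g+\delta_A)^c\cap B$ into level-$\beta$ inequalities, prove the forward implication directly, and handle the converse by squeezing a strict value $v(P)<-\beta<v\left(D^F\right)$ (resp.\ $v\left(\Db^F\right)$) to reach a contradiction, which is precisely how the paper sidesteps the non-attainment issue you flag. As a side note, your quantifier order for $K$ (for each $(u^*,v^*,\gamma)\in\dom g^c$ there exists $(x^*,y^*,\alpha)\in\dom\delta_A^c$) is the one that actually matches the definition \eqref{eq:Set_K} and the value $v\left(\Db^F\right)=\inf\sup$, whereas Lemma~\ref{lem:Lemma1}~$iii)$ as printed has the quantifiers transposed, apparently a typo.
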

\begin{proof}
$i)$ Let us assume that $v(P)\geq v\left(D^F \right)$ and take $(0,0,\delta,\beta)\in \Omega$ with $\delta>0$. From Lemma \ref{lem:Lemma1} $ii)$, there exists $(x^*,y^*,\alpha) \in \dom \delta_A^c$ such that, for all $(u^*,v^*,\gamma)\in\dom g^c$,
	$$g^c(u^*,v^*,\gamma)-f^c(u^*-x^*,-y^*,\alpha)-\delta_A^c(x^*,y^*,\alpha)\geq -\beta,$$
and then $v\left(D^F \right)\geq-\beta$, so $v(P)\geq -\beta$. Finally, Lemma \ref{lem:Lemma1} $i)$ implies that $(0,0,\delta,\beta)\in\epi(f-g+\delta_A)^c$.

To prove the converse implication, we proceed by contradiction. If $v(P)<v\left(D^F \right)$, we can take $\beta\in\R$ such that
\begin{equation*}
	v(P)<-\beta< v\left(D^F \right).
\end{equation*}
Hence, since $v\left(D^F \right)> -\beta$, there exists $(x^*,y^*,\alpha) \in \dom \delta_A^c$ such that, for all $(u^*,v^*,\gamma)\in\dom g^c$, 
	$$g^c(u^*,v^*,\gamma)-f^c(u^*-x^*,-y^*,\alpha)-\delta_A^c(x^*,y^*,\alpha)\geq -\beta,$$
and, by Lemma \ref{lem:Lemma1} $ii)$, $(0,0,\delta,\beta)\in \Omega$, for all $\delta>0$ and, whence, $(0,0,\delta,\beta)\in\epi(f-g+\delta_A)^c$ by hypothesis. Finally,  Lemma \ref{lem:Lemma1} $i)$ implies that $v(P)\geq -\beta$, which gives us the expected contradiction which concludes the proof of $i)$. The proof of $ii)$ follows similar steps.
\end{proof}


We finish this section with conditions which are equivalent to zero duality gap for $(P)-\left(D^F \right)$ and $(P)-\left(\Db^F \right )$.

\begin{proposition}\label{prop:ZDG}
Let $\Omega$, $K$ and $B$ be the sets defined in \eqref{eq:Set_Omega}, \eqref{eq:Set_K} and \eqref{eq:Set_B}, respectively. Then:
\begin{itemize}
	\item [i)]$v(P)=v\left(D^F \right)$ if and only if $\epco(\Omega\cap B)= \epi(f-g+\delta_A)^c\cap B.$
	\item[ii)] $v(P)= v\left(\Db^F \right)$ if and only if $\epco(K \cap B)=\epi(f-g+\delta_A)^c\cap B.$ 
\end{itemize}
\end{proposition}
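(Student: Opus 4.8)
The plan is to reduce each of the two equivalences to the matching of a single scalar threshold, after rewriting all three sets $\Omega\cap B$, $K\cap B$ and $\epi(f-g+\delta_A)^c\cap B$ as explicit ``cylinders'' over $B$, whose points I write as $(0,0,\delta,\beta)$ with $\delta>0$. First I would pin down the right-hand side. Since for $\delta>0$ one has $c(x,(0,0,\delta))=\langle x,0\rangle=0$ for every $x$, a direct evaluation (equivalently Lemma \ref{lem:Lemma1} $i)$ together with Remark \ref{remark:forall}) gives $(f-g+\delta_A)^c(0,0,\delta)=-v(P)$, whence
\begin{equation*}
\epi(f-g+\delta_A)^c\cap B=\left\{(0,0,\delta,\beta):\delta>0,\ \beta\geq -v(P)\right\},
\end{equation*}
with the lower endpoint $\beta=-v(P)$ included. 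In the same spirit, Lemma \ref{lem:Lemma1} $ii)$ and Remark \ref{remark:forall} identify $\Omega\cap B=\{(0,0,\delta,\beta):\delta>0,\ \beta\in S\}$, where $S$ is the set of $\beta$ for which some $(x^*,y^*,\alpha)\in\dom\delta_A^c$ makes the infimum over $\dom g^c$ in the definition of $\eqref{eq:Dual_problem_final}$ at least $-\beta$. As this condition is inherited by every larger $\beta$, the set $S$ is an upper half-line with $\inf S=-v\left(D^F\right)$, its endpoint belonging to $S$ exactly when $\left(D^F\right)$ attains its value.

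The crux is the computation of $\epco(\Omega\cap B)$, and the point to stress is that the e$^\prime$-convex hull closes this half-line from below to the \emph{closed} threshold $\beta\geq -v\left(D^F\right)$, regardless of whether the dual value is attained. To obtain this I would use that every e$^\prime$-convex subset of $W\times\R$ is an intersection of epigraphs of $c^\prime$-elementary functions, so that $\epco(\Omega\cap B)$ equals the intersection of all such epigraphs containing $\Omega\cap B$. The epigraph of the $c^\prime$-elementary function attached to $x\in X$ and $\beta_0\in\R$ is $\{(x^*,y^*,\alpha,r):\langle x,y^*\rangle<\alpha,\ \langle x,x^*\rangle-\beta_0\leq r\}$; it contains $\Omega\cap B$ precisely when $\beta_0\geq v\left(D^F\right)$, the constraint $\langle x,y^*\rangle<\alpha$ being automatic for $y^*=0,\ \alpha=\delta>0$. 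Intersecting these over all $x\in X$ forces at any common point $y^*=0$ (otherwise $\langle x,y^*\rangle$ is unbounded above), $x^*=0$ (otherwise $\langle x,x^*\rangle$ is unbounded above), $\delta>0$ and, taking $\beta_0=v\left(D^F\right)$, $\beta\geq -v\left(D^F\right)$. Hence
\begin{equation*}
\epco(\Omega\cap B)=\left\{(0,0,\delta,\beta):\delta>0,\ \beta\geq -v\left(D^F\right)\right\}.
\end{equation*}

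Comparing the two displayed sets, the equality $\epco(\Omega\cap B)=\epi(f-g+\delta_A)^c\cap B$ holds if and only if the thresholds coincide, i.e.\ $-v\left(D^F\right)=-v(P)$, which is exactly $v(P)=v\left(D^F\right)$; this settles $i)$. Part $ii)$ follows verbatim, replacing $\Omega$ by $K$, Lemma \ref{lem:Lemma1} $ii)$ by $iii)$, and $v\left(D^F\right)$ by $v\left(\Db^F\right)$, since the same intersection argument yields $\epco(K\cap B)=\{(0,0,\delta,\beta):\delta>0,\ \beta\geq -v\left(\Db^F\right)\}$.

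I expect the hull computation to be the main obstacle: one must verify that e$^\prime$-convexification \emph{recovers} the boundary $\beta=-v\left(D^F\right)$ even when the dual supremum is not attained (so that $\Omega\cap B$ is only half-open) and that it does not overshoot it. It is exactly the ``for all $x\in X$'' quantifier in the intersection of $c^\prime$-elementary epigraphs that simultaneously annihilates the first two coordinates and supplies the closed threshold, reconciling the possibly open description of $\Omega\cap B$ with the closed right-hand side. I would also isolate the degenerate cases $v(P),v\left(D^F\right)\in\{\pm\infty\}$ (in particular infeasibility of $(P)$) and treat them separately from the generic finite-value argument, reading the cylinder descriptions and their hulls with the usual $\pm\infty$ conventions.
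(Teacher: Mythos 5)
Your proposal is correct, and it takes a genuinely more direct route than the paper. The paper never writes down $\epco(\Omega\cap B)$ in closed form: for the ``only if'' direction it sandwiches $\Omega\cap B\subseteq \epi(f-g+\delta_A)^c\cap B\subseteq \epco(\Omega\cap B)$, proving the second inclusion by an $\varepsilon$-shift argument (each epigraph point $(0,0,\delta,\beta)$ is accompanied by points $(0,0,\delta,\varepsilon)\in\Omega\cap B$ for all $\varepsilon>\beta$, and a separating c$^\prime$-elementary majorant of $\Omega\cap B$ missing the point would contradict this), and then closes by showing $\epi(f-g+\delta_A)^c\cap B$ is itself $\ep$-convex via the same intersection formula $\bigcap_{x}\epi \bigl(c^{\prime}(\cdot,x)-v(P)\bigr)$ that you use; for the ``if'' direction it runs a separate contradiction argument with a net in $\Omega\cap B$ converging to a hull point. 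You instead compute the hull outright, $\epco(\Omega\cap B)=\{(0,0,\delta,\beta):\delta>0,\ \beta\geq -v(D^F)\}$, by intersecting \emph{all} c$^\prime$-elementary epigraphs containing $\Omega\cap B$ --- and your observation that the containment condition $\beta_0\geq v(D^F)$ is independent of $x$, because the coupling degenerates on $B$, is exactly what makes this work --- so that both implications collapse to comparing two closed thresholds. This buys symmetry (one computation serves both directions), eliminates the paper's net argument, whose topological step is the least transparent part of the published proof, and gives Proposition \ref{prop:Strong_duality_via_Omega} essentially for free, since attainment of $v(D^F)$ is precisely closedness of the threshold of $\Omega\cap B$ itself. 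Two caveats, neither fatal: like the paper, you use that the supremum in $(D^F)$ may be restricted to $\dom\delta_A^c$ when identifying $\inf S=-v(D^F)$; and in the degenerate case $v(D^F)=+\infty$ the family of c$^\prime$-elementary epigraphs containing $\Omega\cap B$ is empty, so your intersection formula (and likewise the paper's formula $\bigcap_x \epi\bigl(c^{\prime}(\cdot,x)-v(P)\bigr)$ when $v(P)=+\infty$) must be read over the empty family --- you rightly flag such cases for separate treatment, which puts you no worse off than the published argument; the case $v(D^F)=-\infty$ is harmless, as then $\Omega\cap B=\emptyset=\epco(\Omega\cap B)$ and the threshold comparison still applies.
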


\begin{proof}
It will be sufficient to prove $i)$ because the proof of $ii)$ follows the same steps.
Suppose that $v(P)=v\left(D^F \right)$. According to Proposition \ref{prop:NC_for_WD} $i)$, $$\Omega\cap B\subseteq \epi(f-g+\delta_A)^c\cap B.$$ 
Now, take a point $(0,0,\delta, \beta) \in \epi(f-g+\delta_A)^c$, with $\delta >0$. By Lemma \ref{lem:Lemma1} $i)$ (recall Remark \ref{remark:forall}), $v(P)\geq -\beta$  and by hypothesis, $v\left(D^F \right)\geq -\beta$, so, for all $\varepsilon >\beta$,  $v\left(D^F \right)> -\varepsilon$ and hence, for each $\varepsilon$, there exists $({x_\varepsilon}^*,{y_\varepsilon}^*,\alpha_\varepsilon) \in \dom \delta_A^c$, such that, for all $(u^*,v^*,\gamma)\in\dom g^c$,
\begin{equation}\label{eq:varepsilon}
g^c(u^*,v^*,\gamma)-f^c(u^*-{x_\varepsilon}^*,-{y_\varepsilon}^*,\alpha_\varepsilon)-\delta_A^c({x_\varepsilon}^*,{y_\varepsilon}^*,\alpha_\varepsilon)\geq -\varepsilon.
\end{equation}
According to Lemma \ref{lem:Lemma1} $ii)$, $(0,0,\delta, \varepsilon) \in \Omega \cap B$, for all $ \varepsilon >\beta$.

Let us assume that $\epco(\Omega\cap B)=\epi H$, where $H:W\times \R \to \Ramp$ is an $\ep$-convex function, and
$$H=\sup_{(x,\lambda) \in D} c^\prime(\cdot,x)-\lambda,$$
with $D \subseteq X \times \R$. If $(0,0,\delta, \beta) \notin \epco(\Omega\cap B)$, there would exist $(\bar x,\bar \lambda) \in D$ such that $(0,0,\delta, \beta) \notin \epi c^\prime(\cdot,\bar x)-\bar \lambda$, and $c^\prime((0,0,\delta),\bar x)-\bar \lambda > \beta$, equivalently,
\begin{equation} \label{eq:lesstric}
\bar \lambda < -\beta.
\end{equation}
On the other hand, by \eqref{eq:varepsilon} and Lemma \ref{lem:Lemma1} $ii)$, $(0,0,\delta, \varepsilon)\in \Omega\cap B$, then  $(0,0,\delta, \varepsilon) \subseteq \epco(\Omega\cap B)$, for all $ \varepsilon >\beta$, and $c^\prime(\bar x,(0,0,\delta))-\bar \lambda \leq \varepsilon$, for all $ \varepsilon >\beta$, equivalently,
$$\bar \lambda \geq -\varepsilon,$$
for all $ \varepsilon >\beta$, contradicting (\ref{eq:lesstric}). It follows that
$$(\Omega\cap B)\subseteq \epi(f-g+\delta_A)^c\cap B \subseteq \epco(\Omega\cap B).$$ 
 If we prove that $\epi(f-g+\delta_A)^c\cap B $ is $\ep$-convex, since $\epco(\Omega\cap B)$ is the smallest $\ep$-convex set containing $\Omega\cap B$,  it will be 
$$\epi(f-g+\delta_A)^c\cap B = \epco(\Omega\cap B).$$
 From Lemma \ref{lem:Lemma1} $i)$, $(0,0,\delta, \beta) \in \epi(f-g+\delta_A)^c \cap B$ if and only if $v(P) \geq -\beta$ and $\delta >0$, which  means that 
 \begin{equation}\label{eq:epifequality}
 \epi(f-g+\delta_A)^c \cap B=\{(0,0,\delta, \beta): \delta>0, \beta \geq -v(P)\}.
 \end{equation}
We can write
$$\epi(f-g+\delta_A)^c \cap B= \bigcap_{x\in \R} \epi c^{\prime} (\cdot,x)-v(P),$$
and hence $\epi(f-g+\delta_A)^c \cap B$ is an $\ep$-convex set.\\
Conversely, let us assume that $$\epco(\Omega\cap B)=\epi(f-g+\delta_A)^c\cap B.$$
Then, by (\ref{eq:epifequality}), 
 \begin{equation}\label{eq:eponvequality}
 \epco(\Omega\cap B)=\{(0,0,\delta, \beta): \delta>0, \beta \geq -v(P)\}.
 \end{equation}
Let us show that $v(P)=v(D^F)$. Since $\Omega\cap B\subseteq \epi(f-g+\delta_A)^c\cap B$, by Proposition \ref{prop:NC_for_WD} $i)$, $v(P)\geq v(D^F)$. Now, on the contrary, suppose that $v(P)> v\left(D^F \right)$, and take $\bar \beta\in\R$ such that 
\begin{equation} 
\label{eq:strict}
v(P)> -\bar \beta > v\left(D^F \right).
\end{equation}
Take a fix real number $\bar \delta >0$ and let us consider the point $(0,0, \bar\delta, \bar\beta) \in \epco(\Omega\cap B)$, according to (\ref{eq:eponvequality}). In the case we could not find a net $(0,0,\delta_\tau, \beta_\tau)_{\tau \in T} \subset \Omega \cap B$ converging to $(0,0,\bar \delta, \bar\beta)$, it would happen that, for all $(0,0,\delta,\beta) \in \Omega \cap B$, $\beta \geq \bar \beta + \varepsilon$, for some $\varepsilon >0$. Then
$$\Omega \cap B \subseteq \{(0,0,\delta, \beta): \delta>0, \beta \geq \bar \beta + \varepsilon\}=\bigcap_{x\in \R} \epi c^{\prime} (\cdot,x)+(\bar \beta +\varepsilon) \subsetneq \epco(\Omega\cap B),$$
and we would have found an $ \ep$-convex set containing $\Omega \cap B$ and strictly contained in its $\ep$-convex hull, which is not possible.\\
Take then a net $(0,0,\delta_\tau, \beta_\tau)_{\tau \in T} \subset \Omega \cap B$ converging to  $(0,0, \bar \delta, \bar\beta)$. We will have, by Lemma  \ref{lem:Lemma1} $ii)$, for each $\tau \in T$, a point $({x_\tau}^*,{y_\tau}^*,\alpha_\tau) \in \dom \delta_A^c$ such that, for all $(u^*,v^*,\gamma)\in\dom g^c$,
$$g^c(u^*,v^*,\gamma)-f^c(u^*-{x_\tau}^*,-{y_\tau}^*,\alpha_\tau)-\delta_A^c({x_\tau}^*,{y_\tau}^*,\alpha_\tau)\geq -\beta_\tau,$$
hence $v\left(D^F \right)\geq -\beta_\tau$, for all $\tau \in T$, and $v\left(D^F \right)\geq -\beta$, contradicting \eqref{eq:strict}.

\end{proof}

\section{Conditions for strong duality in DC problems}
After characterizing weak duality and zero duality gap for the dual pairs $(P)-(D^F)$ and $(P)-\left(\Db^F \right)$, we continue investigating further conditions for strong duality.

\begin{proposition}
\label{prop:Strong_duality_via_Omega}
Let $\Omega$, $K$ and $B$ be the sets defined by \eqref{eq:Set_Omega}, \eqref{eq:Set_K} and \eqref{eq:Set_B}, respectively. Then:
\begin{itemize}
\item[i)] Strong duality holds  for $(P)-(D^F)$ if and only if $$\Omega\cap B  = \epi(f-g+\delta_A)^c\cap B.$$

\item[ii)] Strong duality holds for $(P)-\left(\Db^F \right )$ if and only if 
	\begin{equation*}
		K\cap B=\epi(f-g+\delta_A)^c\cap B.
	\end{equation*}
\end{itemize}

\end{proposition}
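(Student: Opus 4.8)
The plan is to characterize strong duality for each pair by combining the zero duality gap characterization from Proposition~\ref{prop:ZDG} with the additional requirement that the dual problem attains its optimal value, and to show that this attainment is exactly what upgrades the $\ep$-convex hull equality $\epco(\Omega\cap B)=\epi(f-g+\delta_A)^c\cap B$ into the plain set equality $\Omega\cap B=\epi(f-g+\delta_A)^c\cap B$. I would prove only $i)$ in detail, since $ii)$ follows the same steps with $K$ and the roles of the intersection/union over $\dom g^c$ and $\dom\delta_A^c$ interchanged, as in the earlier propositions.

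For the forward implication, assume strong duality holds for $(P)-(D^F)$, so $v(P)=v(D^F)$ and the supremum in \eqref{eq:Dual_problem_final} is attained. Zero duality gap gives, by Proposition~\ref{prop:ZDG}~$i)$, the equality $\epco(\Omega\cap B)=\epi(f-g+\delta_A)^c\cap B$, and from \eqref{eq:epifequality} the right-hand side equals $\{(0,0,\delta,\beta):\delta>0,\ \beta\geq -v(P)\}$. Since $\Omega\cap B\subseteq\epco(\Omega\cap B)$ always, it suffices to prove the reverse inclusion $\epi(f-g+\delta_A)^c\cap B\subseteq\Omega\cap B$. Take $(0,0,\delta,\beta)$ in the right-hand side, so $\delta>0$ and $\beta\geq -v(P)=-v(D^F)$. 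Because the dual optimum is \emph{attained}, there is a genuine maximizer $(\bar x^*,\bar y^*,\bar\alpha)\in\dom\delta_A^c$ realizing $v(D^F)\geq -v(P)$, that is, for all $(u^*,v^*,\gamma)\in\dom g^c$,
\begin{equation*}
	g^c(u^*,v^*,\gamma)-f^c(u^*-\bar x^*,-\bar y^*,\bar\alpha)-\delta_A^c(\bar x^*,\bar y^*,\bar\alpha)\geq -v(P)\geq -\beta;
\end{equation*}
invoking Lemma~\ref{lem:Lemma1}~$ii)$ this is precisely the statement that $(0,0,\delta,\beta)\in\Omega$, and since it already lies in $B$ we conclude $(0,0,\delta,\beta)\in\Omega\cap B$. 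This yields $\Omega\cap B=\epi(f-g+\delta_A)^c\cap B$.

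For the converse, assume $\Omega\cap B=\epi(f-g+\delta_A)^c\cap B$. In particular the two sets have the same $\ep$-convex hull, so Proposition~\ref{prop:ZDG}~$i)$ gives $v(P)=v(D^F)$, i.e.\ zero duality gap. It remains to show the dual value is attained. Since $v(P)$ is finite in the nontrivial case, the point $(0,0,\delta,-v(P))$ lies in $\epi(f-g+\delta_A)^c\cap B$ by \eqref{eq:epifequality} (taking $\beta=-v(P)$, which satisfies $\beta\geq -v(P)$), hence by the assumed equality it lies in $\Omega$. Applying Lemma~\ref{lem:Lemma1}~$ii)$ to this point produces a point $(\bar x^*,\bar y^*,\bar\alpha)\in\dom\delta_A^c$ such that, for all $(u^*,v^*,\gamma)\in\dom g^c$,
\begin{equation*}
	g^c(u^*,v^*,\gamma)-f^c(u^*-\bar x^*,-\bar y^*,\bar\alpha)-\delta_A^c(\bar x^*,\bar y^*,\bar\alpha)\geq v(P),
\end{equation*}
so that the objective of $(D^F)$ at $(\bar x^*,\bar y^*,\bar\alpha)$ is at least $v(P)=v(D^F)$; combined with weak duality (which holds here because $\Omega\cap B\subseteq\epi(f-g+\delta_A)^c\cap B$ and Proposition~\ref{prop:NC_for_WD}~$i)$) this forces equality, so $(\bar x^*,\bar y^*,\bar\alpha)$ is an optimal solution of $(D^F)$. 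Thus strong duality holds.

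I expect the main obstacle to be the converse direction: translating the \emph{set} equality (rather than merely the equal $\ep$-convex hulls) into the existence of an actual dual maximizer. The key is to evaluate at the boundary value $\beta=-v(P)$ and use that $\Omega\cap B$ now contains that boundary point directly, rather than only as a limit of points with $\beta>-v(P)$; this is exactly the gap between zero duality gap (where, as in the proof of Proposition~\ref{prop:ZDG}, one only obtained points of $\Omega\cap B$ for every $\varepsilon>\beta$, i.e.\ an approximating net) and strong duality (attainment at the optimal value itself). One should also take care that in the degenerate cases $v(P)=\pm\infty$ the statements about attainment are interpreted appropriately, but the substantive argument is the finite case handled above.
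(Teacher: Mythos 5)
Your proof is correct and takes essentially the same route as the paper's: for the forward direction, the inclusion $\Omega\cap B\subseteq\epi(f-g+\delta_A)^c\cap B$ plus Lemma~\ref{lem:Lemma1}~$ii)$ applied to an actual dual maximizer, and for the converse, zero duality gap via Proposition~\ref{prop:ZDG} followed by extracting a maximizer from the boundary point with $\beta=-v(P)$. The only compressed step is your passage from equality of the two sets to the hypothesis $\epco(\Omega\cap B)=\epi(f-g+\delta_A)^c\cap B$ of Proposition~\ref{prop:ZDG}, which tacitly uses that $\epi(f-g+\delta_A)^c\cap B$ is itself $\ep$-convex (established via \eqref{eq:epifequality} in the proof of Proposition~\ref{prop:ZDG}); the paper makes this point explicit, but your argument is otherwise the same.
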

\begin{proof}
$i)$ Let us assume that there exists strong duality between $(P)$ and $(D^F)$. By Proposition \ref{prop:NC_for_WD} $i)$, we have the inclusion
\begin{equation*}
	\Omega\cap B  \subseteq \epi(f-g+\delta_A)^c\cap B.
\end{equation*}
To check the reverse inclusion, take $(0,0,\delta,\beta)\in\epi(f-g+\delta_A)^c\cap B$, with $\delta >0$. Then, by Lemma \ref{lem:Lemma1} $i)$, $v(P)\geq-\beta$. Since strong duality holds by hypothesis, there exists $(x_0^*,y_0^*,\alpha_0)\in\dom\delta_A^c$ such that, for all $(u^*,v^*,\gamma)\in\dom g^c$
\begin{equation*}
	v(D^F) = g^c(u^*,v^*,\gamma)-f^c(u^*-x_0^*,-y_0^*,\alpha_0) +\delta_A^c(x_0^*,y_0^*,\alpha_0)\geq-\beta.
\end{equation*}
According to Lemma \ref{lem:Lemma1} $ii)$, it yields $(0,0,\delta,\beta)\in\Omega\cap B$.

We continue showing the converse implication. By Proposition \ref{prop:NC_for_WD} $i)$, $v(D^F)\leq v(P)$. In case $v(P)=-\infty$, strong duality holds trivially, so let us assume that $v(P)=-\beta\in\R$. \\
It was shown in Proposition \ref{prop:ZDG} that, in general, $\epi(f-g+\delta_A)^c\cap B$ is an $\ep$-convex set, since we can express it as
$$\epi(f-g+\delta_A)^c \cap B= \bigcap_{x\in \R} \epi c^{\prime} (\cdot,x)-v(P).$$
According to the hypothesis we conclude that $\Omega \cap B$ is, then, an $\ep$-convex set, and, hence, by Proposition \ref{prop:ZDG} $i)$, $-\beta=v(P)=v(D^F)$.\\
By Lemma \ref{lem:Lemma1} $i)$, $(0,0,\delta,\beta) \in \epi(f-g+\delta_A)^c \cap B$, for any $\delta >0$, and again, due to the equality in the hypothesis, and Lemma \ref{lem:Lemma1} $ii)$, 
there must exist $\xb^*,\yb^*\in X^*$ and $\alphab>0$ such that for all $(u^*,v^*,\gamma)\in\dom g^c$, it holds
\begin{equation*}
	 g^c(u^*,v^*,\gamma)-f^c(u^*-\xb^*,-\yb^*,\alphab) - \delta_A^c(\xb^*,\yb^*,\alphab)\geq -\beta=v(D^F),
\end{equation*}
i.e., $v(D^F)$ is solvable and there exists strong duality for the dual pair $(P)-(D^F)$. The proof of $ii)$ follows similar steps.
\end{proof}

Next example shows, as it is predictable, that
$$\Omega \cap B \subsetneq\epco(\Omega\cap B)= \epi(f-g+\delta_A)^c\cap B$$
in a DC problem when there is zero duality gap and the dual problem $(D^F)$ is non-solvable, see Propositions \ref{prop:ZDG} and \ref{prop:Strong_duality_via_Omega}.
\begin{example}	\label{ex:Strict_containments}
Let us define $f,g:\R \to \Ramp$,
\begin{equation*}
	f(x)=\left\{
	\begin{aligned}
		-2\sqrt {-x}, &~~~x\leq 0,\\
		+\infty, &~~~ \text{otherwise,}
	\end{aligned}
	\right.
	\hspace{1cm}\text{and}\hspace{1cm}
	g(x)=\left\{
	\begin{aligned}
		-x, &~~~x \leq 0,\\
		+\infty, &~~~ \text{otherwise,}
	\end{aligned}
	\right.
\end{equation*}
with $A=[0, +\infty[$. Applying the definition of the $c$-conjugate function, and denoting by $D:=(\R_+ \times \R_+)\setminus\{(0,0)\}$,
\begin{equation*}
	f^c(x^*,y^*,\alpha)=\left\{
	\begin{aligned}
		{1 \over {x^*}}, &~~~ \text{if }x^*> 0,\,(y^*,\alpha) \in D,\\
		+\infty, &~~~ \text{otherwise,}
	\end{aligned}
	\right.
\end{equation*}
whereas
\begin{equation*}
	g^c(u^*,v^*,\gamma)=\left\{
	\begin{aligned}
		0,&~~~ \text{if } u^*\geq -1,\,(v^*,\gamma) \in D,\\
		+\infty, &~~ \text{ otherwise,}
	\end{aligned}
	\right.
\end{equation*}
and
\begin{equation*}
	\delta_A^c(x^*,y^*,\alpha)=\left\{
	\begin{aligned}
		0,&~~~ \text{if } x^*\leq 0,\,(y^*,\alpha) \in D,\\
		+\infty, &~~ \text{ otherwise.}
	\end{aligned}
	\right.
\end{equation*}
It is evident that $v(P)=0$.
Now, 

\begin{equation*}
v(D^F)={\sup_{\substack{ x^* \leq 0,\\(y^*,\alpha) \in D} }}{\inf_{\substack{ u^* > x^*,\\u^* \geq -1,\\(v^*,\gamma) \in D}}} {1\over {x^*-u^*}}={\sup_{\substack{ x^* \leq 0,\\(y^*,\alpha) \in D} }}{1\over {x^*-1}}=0,
\end{equation*}
and it is non-solvable.\\
Finally, let us compute the sets $(\Omega\cap B)$ and  $\epi(f-g+\delta_A)^c\cap B$. It is clear that, since $\dom \delta_A \cap \dom f= \{0 \}$, we will have that  

$$\epi(f-g+\delta_A)^c=\R \times \R \times \R_{++} \times \R_+,$$
and 
$$\epi(f-g+\delta_A)^c\cap B =\{(0,0)\} \times \R_{++} \times \R_+.$$

\noindent
In Proposition \ref{prop:Econv_formula}, the set $\Omega$ will be characterized as it follows, in the case $g$ is e-convex,
$$\Omega = \bigcup_{\dom\delta_A^c} \epi\Bigl(f- g - c(\cdot,(-x^*,-y^*,\alpha))-\delta_A^c(x^*,y^*,\alpha)\Bigr)^c.$$

\noindent
Take $(x^*, y^*, \alpha)\in \dom \delta^c_A=\R_{-}\times D$. Then, a simple computation leads to
\begin{align*}
	(f- g - c(\cdot,(-x^*,-y^*,\alpha))\Bigr)^c & (u^*, v^*, \gamma)\\
	& =\left\{
	\begin{aligned}
		{1 \over {u^*+x^*+1}}, &~~~ \text{if }u^*> -x^*-1,\,(v^*,\gamma) \in D,\\
		+\infty, &~~~ \text{otherwise,}
	\end{aligned}
	\right.
\end{align*}
and 
\begin{equation*}
	\epi\Bigl(f- g - c(\cdot,(-x^*,-y^*,\alpha))\Bigr)^c=]-x^*-1,+\infty[\,\times D\times \Big[ {1 \over {u^*+x^*+1}}, +\infty\Big[.
\end{equation*}

\noindent
Hence
$$\Omega = \bigcup_{\R_{-}} ~ ]-x^*-1,+\infty[\,\times D\times \Big[{1 \over {u^*+x^*+1}}, +\infty\Big[,$$
and 
$$\Omega \cap B=\{(0,0)\} \times \R_{++} \times \R_{++}.$$

\noindent
Figure \ref{fig:Strict_containments} contains the projection of the two computed sets onto $\R^2$ (recall that these sets are contained in $X^* \times X^* \times \R^2$).
\begin{center}
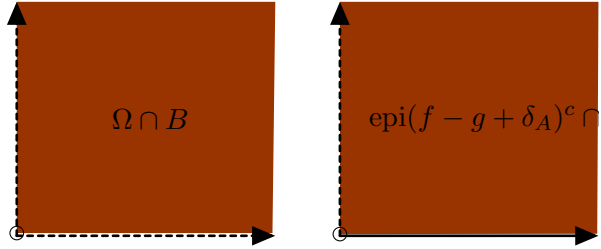
\begin{figure}[h]
\centering
\definecolor{zzttqq}{rgb}{0.6,0.2,0.}
\begin{tikzpicture}[line cap=round,line join=round,>=triangle 45,x=1.0cm,y=1.0cm,scale=0.85]
\clip(-2,1) rectangle (8,5);
\fill[line width=2.pt,color=zzttqq,fill=zzttqq,fill opacity=0.10000000149011612] (-1.,5.) -- (-1.,1.3927272727272735) -- (2.956198347107435,1.3927272727272737) -- (3.,5.) -- cycle;
\fill[line width=2.pt,color=zzttqq,fill=zzttqq,fill opacity=0.10000000149011612] (4.,5.) -- (4.,1.3761983471074388) -- (7.980991735537183,1.3817079889807171) -- (8.,5.) -- cycle;
\draw [->,line width=1.pt,dash pattern=on 2pt off 2pt] (-1.,1.35) -- (-1.,5.);
\draw [->,line width=1.pt,dash pattern=on 2pt off 2pt] (-1.,1.35) -- (3.,1.35);
\draw [->,line width=1.pt,dash pattern=on 2pt off 2pt] (4.,1.35) -- (4.,5.);
\draw [->,line width=1.pt] (4.,1.35) -- (8.,1.35);
\draw (0.31157024793388177,3.480881542699726) node[anchor=north west] {$\Omega\cap B$};
\draw (4.2917355371900795,3.5635261707989) node[anchor=north west] {$\text{epi}(f-g+\delta_A)^c \cap B$};
\begin{scriptsize}
\draw [color=black] (-1.,1.3927272727272735) circle (3.0pt);
\draw [color=black] (4.,1.3761983471074388) circle (3.0pt);
\end{scriptsize}
\end{tikzpicture}
\caption{Projections onto $\R^2$ in Example \ref{ex:Strict_containments}}
\label{fig:Strict_containments}
\end{figure}
\end{center}
\end{example}
Next corollary establishes that $\ep$-convexity of the set $\Omega\cap B$ or $K\cap B$  is a necessary requirement for strong duality in each dual pair. Its proof is inmediate from Propositions \ref{prop:ZDG} and \ref{prop:Strong_duality_via_Omega}.

\begin{corollary}
Let $\Omega$, $K$ and $B$ be the sets defined by \eqref{eq:Set_Omega}, \eqref{eq:Set_K} and \eqref{eq:Set_B}, respectively. Then strong duality holds  for $(P)-(D^F)$ $((P)-\left(\Db^F \right )$, respectively$)$ if $\Omega\cap B$ $(K\cap B$, respectively$)$ is $\ep$-convex. 
\end{corollary}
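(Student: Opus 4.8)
The plan is to read Propositions \ref{prop:ZDG} and \ref{prop:Strong_duality_via_Omega} simultaneously through the $\ep$-convexity hypothesis, so that the two characterizations they provide collapse onto a single set equation. Proposition \ref{prop:Strong_duality_via_Omega} i) characterizes strong duality for $(P)-(D^F)$ by the equality $\Omega\cap B=\epi(f-g+\delta_A)^c\cap B$, whereas Proposition \ref{prop:ZDG} i) characterizes zero duality gap by $\epco(\Omega\cap B)=\epi(f-g+\delta_A)^c\cap B$. The key observation is that these two right-hand conditions share the same right factor and differ only in whether $\Omega\cap B$ appears raw or under the $\ep$-convex hull; as soon as $\Omega\cap B$ is $\ep$-convex one has $\epco(\Omega\cap B)=\Omega\cap B$, since taking the $\ep$-convex hull of an already $\ep$-convex set changes nothing. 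Thus the hypothesis identifies the strong-duality equation with the zero-duality-gap equation.

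Carrying this out, I would first recall from the proof of Proposition \ref{prop:ZDG} that $\epi(f-g+\delta_A)^c\cap B$ is itself $\ep$-convex, since by \eqref{eq:epifequality} it can be written as $\bigcap_{x\in\R}\epi c^{\prime}(\cdot,x)-v(P)$. With $\Omega\cap B$ $\ep$-convex we substitute $\epco(\Omega\cap B)=\Omega\cap B$ into Proposition \ref{prop:ZDG} i), turning the zero-duality-gap condition into $\Omega\cap B=\epi(f-g+\delta_A)^c\cap B$, which is precisely the equality that Proposition \ref{prop:Strong_duality_via_Omega} i) equates to strong duality. Consequently, under $\ep$-convexity of $\Omega\cap B$, strong duality for $(P)-(D^F)$ and zero duality gap become equivalent, and once the gap is zero the equality holds and $(D^F)$ is solvable, giving full strong duality. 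The assertion for $(P)-\left(\Db^F\right)$ is obtained verbatim by replacing $\Omega$ by $K$ and invoking parts ii) of the same two propositions.

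The delicate point, and what I expect to be the main obstacle, is the containment $\epi(f-g+\delta_A)^c\cap B\subseteq\Omega\cap B$: $\ep$-convexity of $\Omega\cap B$ by itself delivers only $\Omega\cap B=\epco(\Omega\cap B)$, not this inclusion. Indeed, by \eqref{eq:epifequality} the right-hand set is $\{(0,0,\delta,\beta):\delta>0,\ \beta\geq -v(P)\}$, while $\Omega\cap B$ admits the analogous description governed instead by $-v\left(D^F\right)$; matching them forces $v(P)=v\left(D^F\right)$, so the argument must route through Proposition \ref{prop:ZDG} to pin down this value-matching. The genuine role played by $\ep$-convexity is then transparent: it supplies exactly the attainment (closedness in the last coordinate, equivalently solvability of the dual) that is missing in the strict-containment situation of Example \ref{ex:Strict_containments}, thereby upgrading a zero duality gap into strong duality, whereas the value-matching itself is the part that Proposition \ref{prop:ZDG} must certify.
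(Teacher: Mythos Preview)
Your argument correctly shows that, once $\Omega\cap B$ is $\ep$-convex, the conditions in Proposition~\ref{prop:ZDG}~i) and Proposition~\ref{prop:Strong_duality_via_Omega}~i) coincide, so that zero duality gap and strong duality become equivalent. But this does not deliver strong duality: you still need $v(P)=v\left(D^F\right)$, and that is never established. You flag this yourself (``the value-matching itself is the part that Proposition~\ref{prop:ZDG} must certify''), yet Proposition~\ref{prop:ZDG} certifies the value-matching only once $\epco(\Omega\cap B)=\epi(f-g+\delta_A)^c\cap B$ is known, which under your hypothesis is exactly $\Omega\cap B=\epi(f-g+\delta_A)^c\cap B$, the equality you are after. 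The reasoning is circular, and in fact the sufficiency direction is false: in Example~\ref{ex:Example_g_non_econvex_and_WD} the inner infimum in $\left(D^F\right)$ equals $0$ at $(x^*,y^*,\alpha)=(0,0,\alpha)$ and $-\infty$ at every other point of $\dom\delta_A^c$, so $v\left(D^F\right)=0$ is attained and Lemma~\ref{lem:Lemma1}~ii) gives $\Omega\cap B=\{(0,0)\}\times\R_{++}\times[0,+\infty[$, which is $\ep$-convex by the argument in the proof of Proposition~\ref{prop:ZDG}; nevertheless $v(P)=-1\neq 0$ and strong duality fails.

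The sentence introducing the corollary resolves the ambiguity: $\ep$-convexity of $\Omega\cap B$ (resp.\ $K\cap B$) is meant as a \emph{necessary} condition for strong duality, so the ``if'' in the statement should be read as ``only if''. That direction is indeed immediate from the two propositions, as the paper says: strong duality gives $\Omega\cap B=\epi(f-g+\delta_A)^c\cap B$ by Proposition~\ref{prop:Strong_duality_via_Omega}~i), and the right-hand side was shown to be $\ep$-convex in the proof of Proposition~\ref{prop:ZDG}. The case of $K\cap B$ is identical using parts~ii).
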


Our aim now is to show in which way strong duality for $(P)-(D^F)$ and $(P)-\left (\Db^F \right )$ are related. We need some preliminary results. The first one is a characterization for strong duality between the dual pair
\begin{equation*}
	\begin{aligned}
	(P_0)&~~~\inf_{X} \Bigl\{ f(x)+\delta_A(x) \Bigr\}\\
	(D_0)&~~~\sup_{\dom \delta_A^c} \Bigl\{ -f^c(-x^*,-y^*,\alpha)-\delta_A^c(x^*,y^*,\alpha)\Bigr\},
	\end{aligned}
\end{equation*}
that is, the particular case of a DC problem and its dual Fenchel with $g\equiv 0$.  Strong duality conditions for this pair of dual problems were studied firstly in \cite{FVR2012}, where it is shown that, by construction, weak duality holds, i.e., $v(P_0)\geq v(D_0)$.
\begin{proposition}
\label{thm:SD_for_P0_D0}
Let $B$ be the set defined in \eqref{eq:Set_B}. There exists strong duality for $(P_0)-(D_0)$ if and only if
\begin{equation*}
	\epi(f+\delta_A)^c\cap B =\bigcup_{\dom \delta_A^c}\Bigl \{ \epi\Bigl(f-c(\cdot,(-x^*,-y^*,\alpha))-\delta_A^c(x^*,y^*,\alpha)\Bigr)^c\cap B \Bigr \}.
\end{equation*}
\end{proposition}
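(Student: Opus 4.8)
The plan is to characterise membership of a generic point $(0,0,\delta,\beta)$ (with $\delta>0$) in each of the two sets appearing in the claimed equality, and then translate the resulting inequalities into statements about $v(P_0)$ and $v(D_0)$. For the left-hand side this is immediate: specialising Lemma~\ref{lem:Lemma1}~$i)$ to $g\equiv 0$ (so that $f-g+\delta_A=f+\delta_A$ and $v(P)=v(P_0)$) gives that $(0,0,\delta,\beta)\in\epi(f+\delta_A)^c\cap B$ if and only if $v(P_0)\geq-\beta$.

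For the right-hand side I would compute, for each fixed $(x^*,y^*,\alpha)\in\dom\delta_A^c$, the value at $(0,0,\delta)$ of the $c$-conjugate of $h:=f-c(\cdot,(-x^*,-y^*,\alpha))-\delta_A^c(x^*,y^*,\alpha)$. Since $\delta>0$ forces $c(x,(0,0,\delta))=0$ for every $x$, and since subtracting the constant $\delta_A^c(x^*,y^*,\alpha)$ from $h$ merely adds it to the conjugate, the very computation already carried out in the proof of Lemma~\ref{lem:Lemma1}~$ii)$ yields $h^c(0,0,\delta)=f^c(-x^*,-y^*,\alpha)+\delta_A^c(x^*,y^*,\alpha)$. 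Consequently $(0,0,\delta,\beta)$ belongs to the union on the right-hand side if and only if there exists $(x^*,y^*,\alpha)\in\dom\delta_A^c$ with
\[
-f^c(-x^*,-y^*,\alpha)-\delta_A^c(x^*,y^*,\alpha)\geq-\beta,
\]
that is, if and only if some dual feasible point attains a value not smaller than $-\beta$.

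With these two descriptions in hand the equivalence becomes a short logical argument. Weak duality $v(P_0)\geq v(D_0)$, which holds by construction, shows that the right-hand union is always contained in $\epi(f+\delta_A)^c\cap B$, because a dual value $\geq-\beta$ forces $v(P_0)\geq v(D_0)\geq-\beta$. For the forward implication, assuming strong duality, any $(0,0,\delta,\beta)$ in the left-hand side gives $v(P_0)\geq-\beta$; solvability of $(D_0)$ then produces a maximiser attaining $v(D_0)=v(P_0)\geq-\beta$, which places the point in the right-hand union. For the converse, the case $v(P_0)=-\infty$ is trivial (both sets are then empty by weak duality and strong duality holds by convention), while if $v(P_0)=-\beta_0\in\R$ the point $(0,0,\delta,\beta_0)$ lies in the left-hand side, hence, by the assumed equality, in the right-hand union, delivering a dual feasible point with value $\geq-\beta_0=v(P_0)$; weak duality forces this value to equal $v(P_0)$ and to be attained, i.e.\ strong duality holds.

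The only genuinely computational step, and hence the main thing to watch, is the identity $h^c(0,0,\delta)=f^c(-x^*,-y^*,\alpha)+\delta_A^c(x^*,y^*,\alpha)$: one must keep track of the coupling $c$ being $+\infty$ off the relevant half-space together with the $-\infty$-priority sign convention, so as to be sure the equality is valid in $\Ramp$ even when $f^c(-x^*,-y^*,\alpha)=+\infty$ (in which case both the corresponding epigraph fibre and the claimed conjugate are empty or infinite, and the characterisation still reads correctly). Everything else is the bookkeeping of the $\inf$/$\sup$ definitions of $v(P_0)$ and $v(D_0)$.
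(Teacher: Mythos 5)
Your proof is correct and takes essentially the same route as the paper's: both arguments reduce the set equality to the two membership characterizations $(0,0,\delta,\beta)\in\epi(f+\delta_A)^c \Leftrightarrow v(P_0)\geq-\beta$ (Lemma \ref{lem:Lemma1} $i)$ with $g\equiv 0$) and $(0,0,\delta,\beta)\in\epi\bigl(f-c(\cdot,(-x^*,-y^*,\alpha))-\delta_A^c(x^*,y^*,\alpha)\bigr)^c \Leftrightarrow -f^c(-x^*,-y^*,\alpha)-\delta_A^c(x^*,y^*,\alpha)\geq-\beta$, and then use weak duality for the inclusion of the union in $\epi(f+\delta_A)^c\cap B$ and dual solvability for the reverse one. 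The identity $h^c(0,0,\delta)=f^c(-x^*,-y^*,\alpha)+\delta_A^c(x^*,y^*,\alpha)$ that you single out as the delicate step is precisely the inline computation the paper carries out (with the same treatment of the coupling $c$ and the $-\infty$-priority convention), so no substantive difference remains.
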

\begin{proof}
Let us start proving that there exists strong duality between $(P_0)-(D_0)$. If $v(P_0)=-\infty$, strong duality holds trivially, since weak duality fulfills, so let $v(P_0)=-\beta\in\R$. Then, according to Lemma \ref{lem:Lemma1} $i)$, $(0,0,\delta,\beta)\in\epi(f+\delta_A)^c$ for all $\delta>0$. By hypothesis, there exists $(\xb^*,\yb^*,\alphab)\in\dom\delta_A^c$ such that
\begin{equation*}
	c(x,(0,0,\delta))-f(x)+c(x,(-\xb^*,-\yb^*,\alphab))+\delta_A^c(\xb^*,\yb^*,\alphab)\leq \beta
\end{equation*}
for all $x\in X$, which is equivalent to say that
\begin{equation*}
	-f^c(-\xb^*,-\yb^*,\alphab)-\delta_A^c(\xb^*,\yb^*,\alphab)\geq -\beta,
\end{equation*}
or, in other words,
\begin{equation*}
	-\beta=v(P_0)\geq v(D_0) \geq -f^c(-\xb^*,-\yb^*,\alphab)-\delta_A^c(\xb^*,\yb^*,\alphab)\geq -\beta,
\end{equation*}
and, then, strong duality holds for $(P_0)-(D_0)$.\\
Conversely, suppose that $v(P_0)=v(D_0)$ and the dual problem is solvable.
Take $(\bar x^*,\bar y^*,\bar \alpha) \in \dom \delta_A^c$ and  
\begin{equation*}
	(0,0,\delta,\beta) \in \epi\Bigl(f-c(\cdot,(-\bar x^*,-\bar y^*,\bar \alpha))-\delta_A^c(\bar x^*,\bar y^*,\bar \alpha)\Bigr)^c.
\end{equation*}
Then, reasoning as before, we have $v(D_0)\geq -\beta$ and also $v(P_0)\geq -\beta$. By Lemma \ref{lem:Lemma1} $i)$, $(0,0,\delta,\beta)\in\epi(f+\delta_A)^c$ and 
\begin{equation*}
\epi(f+\delta_A)^c\cap B \supseteq \bigcup_{\dom \delta_A^c}\Bigl \{ \epi\Bigl(f-c(\cdot,(-x^*,-y^*,\alpha))-\delta_A^c(x^*,y^*,\alpha)\Bigr)^c\cap B \Bigr \}.
\end{equation*}
Finally, let us show the converse inclusion. Take $(0,0,\delta,\beta)\in\epi(f+\delta_A)^c$ with $\delta>0$. Then, due to Lemma \ref{lem:Lemma1} $i)$, it holds $v(P_0)\geq -\beta$. By hypothesis, $v(D_0)=v(P_0)$ and there exists an optimal solution $(\xb^*,\yb^*,\alphab)\in\dom\delta_A^c$ such that
\begin{equation*}	
	v(D_0)=-f^c(-\xb^*,-\yb^*,\alphab)-\delta_A^c(\xb^*,\yb^*,\alphab)\geq-\beta.
\end{equation*}
Then, applying the definition of the $c$-conjugate function
\begin{equation*}
	c(x,(-\xb^*,-\yb^*,\alphab))-f(x)+\delta_A^c(\xb^*,\yb^*,\alphab)\leq \beta
\end{equation*}
for all $x\in X$, and 
\begin{equation*}
	c(x,(0,0,\delta))-[f-c(\cdot,(-\xb^*,-\yb^*,\alphab))-\delta_A^c(\xb^*,\yb^*,\alphab)](x)\leq \beta
\end{equation*}
for all $x\in X$. Then,
\begin{equation*}
	\Bigl(f-c(\cdot,(-\xb^*,-\yb^*,\alphab))-\delta_A^c(\xb^*,\yb^*,\alphab)\Bigr)^c(0,0,\delta)\leq \beta,
\end{equation*}
which means that
\begin{equation*}
	(0,0,\delta,\beta)\in\epi\Bigl(f-c(\cdot,(-\xb^*,-\yb^*,\alphab))-\delta_A^c(\xb^*,\yb^*,\alphab)\Bigr)^c
\end{equation*}
concluding the proof.
\end{proof}

In this moment, we need to define the set
\begin{equation}
	\label{eq:Set_Xi}
	\Lambda := \bigcap_{\dom g^c} \biggl\{ \epi (f+\delta_A)^c - (u^*,0,0,g^c(u^*,v^*,\gamma)) \biggr\}.
\end{equation}
We relate this set and $\Omega$ defined in \eqref{eq:Set_Omega} with the function $f-\eco g$ in the next result.

\begin{proposition}
\label{prop:Econv_formula}
Let $\Omega$ and $\Lambda$ the sets defined in \eqref{eq:Set_Omega} and \eqref{eq:Set_Xi}, respectively. Then, the following equalities hold:
\begin{equation*}
	\begin{aligned}
	i)~~\Lambda &= \epi(f-\eco g+\delta_A)^c;\\
	ii)~~\Omega &= \bigcup_{\dom\delta_A^c} \epi\Bigl(f-\eco g - c(\cdot,(-x^*,-y^*,\alpha))-\delta_A^c(x^*,y^*,\alpha)\Bigr)^c.
	\end{aligned}
\end{equation*}
\end{proposition}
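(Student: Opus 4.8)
The plan is to reduce both equalities to a single computation of the $c$-conjugate of a function of the form $h-\eco g$, where $h$ is proper with $\dom h\subseteq\dom g$, and then to read off $i)$ and $ii)$ by specialising $h$ and translating the resulting inequalities into set membership. The auxiliary identity I would establish first is
\[
(h-\eco g)^c(p^*,q^*,\rho)=\sup_{(u^*,v^*,\gamma)\in\dom g^c}\bigl\{h^c(p^*+u^*,q^*,\rho)-g^c(u^*,v^*,\gamma)\bigr\},
\]
valid for every proper $h$ with $\dom h\subseteq\dom g$.

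To prove this identity I would use $\eco g=g^{cc^\prime}$ (Theorem \ref{thm:Theorem_charac}$(ii)$, available because the proper convex $g$ possesses a proper e-convex minorant), so that $\eco g(x)=\sup_{\dom g^c}\{c(x,(u^*,v^*,\gamma))-g^c(u^*,v^*,\gamma)\}$. Substituting this into the definition of the $c$-conjugate and exchanging the two suprema turns $(h-\eco g)^c(p^*,q^*,\rho)$ into a supremum over $\dom g^c$ and over $x$ of $c(x,(p^*,q^*,\rho))+c(x,(u^*,v^*,\gamma))-h(x)-g^c(u^*,v^*,\gamma)$. The decisive step is that, since $\dom h\subseteq\dom g\subseteq H^{-}_{v^*,\gamma}$ for every $(u^*,v^*,\gamma)\in\dom g^c$, the half-space constraint carried by $c(\cdot,(u^*,v^*,\gamma))$ is automatically met on $\dom h$; hence on $\dom h$ the sum of the two coupling functions collapses to $c(\cdot,(p^*+u^*,q^*,\rho))$, and the inner supremum is exactly $h^c(p^*+u^*,q^*,\rho)-g^c(u^*,v^*,\gamma)$, as claimed.

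For $i)$ I would apply the identity with $h=f+\delta_A$, which is proper with $\dom(f+\delta_A)=\dom f\cap A\subseteq\dom f\subseteq\dom g$. Then $(p^*,q^*,\rho,\beta)\in\epi(f-\eco g+\delta_A)^c$ if and only if $(f+\delta_A)^c(p^*+u^*,q^*,\rho)\le\beta+g^c(u^*,v^*,\gamma)$ for all $(u^*,v^*,\gamma)\in\dom g^c$, i.e.\ if and only if $(p^*+u^*,q^*,\rho,\beta+g^c(u^*,v^*,\gamma))\in\epi(f+\delta_A)^c$ for every such triple, which is precisely the membership defining $\Lambda$ in \eqref{eq:Set_Xi}.

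For $ii)$ I would fix $(x^*,y^*,\alpha)\in\dom\delta_A^c$ and take $h=f-c(\cdot,(-x^*,-y^*,\alpha))$ together with the constant shift $-\delta_A^c(x^*,y^*,\alpha)$. When $\dom f\subseteq H^{-}_{-y^*,\alpha}$ this $h$ is proper with $\dom h=\dom f\subseteq\dom g$, and the elementary coupling identity $\bigl(f-c(\cdot,(a^*,b^*,\mu))\bigr)^c(p^*,q^*,\rho)=f^c(p^*+a^*,q^*,\rho)$ used when deriving \eqref{eq:Dual_problem_final} gives $h^c(p^*+u^*,q^*,\rho)=f^c(p^*+u^*-x^*,q^*,\rho)$; combining this with the auxiliary identity and the fact that subtracting the constant $\delta_A^c(x^*,y^*,\alpha)$ from the conjugated function merely adds it to the conjugate, the inner intersection over $\dom g^c$ in \eqref{eq:Set_Omega} coincides with $\epi\bigl(f-\eco g-c(\cdot,(-x^*,-y^*,\alpha))-\delta_A^c(x^*,y^*,\alpha)\bigr)^c$, and taking the union over $\dom\delta_A^c$ yields $ii)$. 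The point needing most care, and the main obstacle, is the complementary case $\dom f\not\subseteq H^{-}_{-y^*,\alpha}$: there $h$ takes the value $-\infty$, forcing $h^c\equiv+\infty$ and making $f-\eco g-c(\cdot,(-x^*,-y^*,\alpha))-\delta_A^c(x^*,y^*,\alpha)$ improper as well, so both the corresponding term of the intersection and the corresponding epigraph are empty and those indices contribute nothing to either side. Apart from this case split, the only genuinely delicate issues are keeping the DC and conjugation sign conventions straight and justifying the half-space redundancy; the remainder is bookkeeping.
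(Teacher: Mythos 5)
Your proof is correct and follows essentially the same route as the paper's: both expand $\eco g=g^{cc^{\prime}}$ inside the $c$-conjugate, exchange suprema (the paper phrases this as the conjugate of an infimum being the supremum of the conjugates, then intersects epigraphs), and use $\dom f\subseteq\dom g\subseteq H^{-}_{v^*,\gamma}$ to collapse the coupling functions before translating into the set memberships defining $\Lambda$ and $\Omega$. Your single auxiliary identity covering both parts, and your explicit treatment of the degenerate indices with $\dom f\not\subseteq H^{-}_{-y^*,\alpha}$ (where both sides are empty), are only organizational refinements of the paper's two parallel computations, which leave that degenerate case implicit.
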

\begin{proof}
$i)$ Due to basic properties of the conjugate function, we observe that
\begin{align*}
	(f-\eco g+\delta_A)^c &= \biggl(f+\delta_A-\sup_{\dom g^c}\left\{ c(\cdot,(u^*,v^*,\gamma))-g^c(u^*,v^*,\gamma)\right\}\biggr)^c\\
	&= \left(\inf_{\dom g^c} \biggl\{ f+\delta_A+g^c(u^*,v^*,\gamma)-c(\cdot,(u^*,v^*,\gamma))\biggr\} \right)^c\\
	&= \sup_{\dom g^c} \biggl\{ f+\delta_A-c(\cdot,(u^*,v^*,\gamma))+g^c(u^*,v^*,\gamma)\biggr\}^c.
\end{align*}
Hence, from this chain of equalities, we obtain
\begin{equation*}
	\epi(f-\eco g+\delta_A)^c = \bigcap_{\dom g^c} \epi\Bigl(f+\delta_A-c(\cdot,(u^*,v^*,\gamma))+g^c(u^*,v^*,\gamma)\Bigr)^c.
\end{equation*}
So, if $(w^*,z^*,\lambda,\beta)\in\epi\Bigl(f+\delta_A-c(\cdot,(u^*,v^*,\gamma)) + g^c(u^*,v^*,\gamma)	\Bigr)^c$, for any point $(u^*,v^*,\gamma) \in \dom g^c$, using the fact that $\dom f \subseteq \dom g \subseteq H_{v^*,\gamma}^-$, we conclude that $(f+\delta_A)^c(w^*+u^*,z^*,\lambda)\leq \beta+g^c(u^*,v^*,\gamma)$,
which means that
\begin{equation*}
	(w^*+u^*,z^*,\lambda,\beta+g^c(u^*,v^*,\gamma)) \in \epi(f+\delta_A)^c
\end{equation*}
or, equivalently, $\epi(f-\eco g+\delta_A)^c=\Lambda$.

$ii)$ In a similar way as in $i)$, applying basic properties of $c$-conjugation theory, for any point $(x^*,y^*,\alpha) \in \dom\delta_A^c$, one gets
\begin{equation*}
\begin{aligned}
	&\Bigl(f-c(\cdot,(-x^*,-y^*,\alpha)) -\eco g-\delta_A^c(x^*,y^*,\alpha)\Bigr)^c\\
	&=\sup_{\dom g^c} \biggl\{ f-c(\cdot,(-x^*,-y^*,\alpha))-c(\cdot,(u^*,v^*,\gamma))+g^c(u^*,v^*,\gamma)-\delta_A^c(x^*,y^*,\alpha)\biggr\}^c,
\end{aligned}
\end{equation*}
and we obtain the equality
\begin{align*}
	& \epi\Bigl(f-c(\cdot,(-x^*,-y^*,\alpha))-\eco g-\delta_A^c(x^*,y^*,\alpha)\Bigr)^c\\
	&= \bigcap_{\dom g^c} \epi\Bigl(f-c(\cdot,(-x^*,-y^*,\alpha))-c(\cdot,(u^*,v^*,\gamma))+ g^c(u^*,v^*,\gamma)-\delta_A^c(x^*,y^*,\alpha)\Bigr)^c.
\end{align*}
In this way, if $(w^*,z^*,\lambda,\beta)$ belongs to
\begin{equation*}
	\epi\Bigl(f-c(\cdot,(-x^*,-y^*,\alpha))-c(\cdot,(u^*,v^*,\gamma)) + g^c(u^*,v^*,\gamma)-\delta_A^c(x^*,y^*,\alpha)\Bigr)^c,
\end{equation*}
since $\dom f\subseteq {H_{v^*,\gamma}^-}$, we have, equivalently, that 
\begin{equation*}
	(w^*+u^*,z^*,\lambda,\beta+g^c(u^*,v^*,\gamma)-\delta_A^c(x^*,y^*,\alpha))\in\epi\Bigl(f-c(\cdot,(-x^*,-y^*,\alpha))\Bigr)^c,
\end{equation*}
and the proof of $ii)$ is completed according to the definition of $\Omega$ in \eqref{eq:Set_Omega}.
\end{proof}
Finally, we present the relationship between strong duality for $(P)-(D^F)$ and strong duality for $(P)-\left (\Db^F \right )$.
\begin{theorem}
\label{thm:SD_mixed}
Let $B$ and $\Lambda$ the sets defined in \eqref{eq:Set_B} and \eqref{eq:Set_Xi}, respectively, and assume that
\begin{equation} \label{eq:problemPe}
\epi(f-g+\delta_A)^c\cap B = \Lambda\cap B.
\end{equation}
 Then, the following statements are equivalent:
\begin{itemize}
	\item[i)] There exists strong duality for $(P)-(D^F)$;
	\item[ii)] There exists strong duality for $(P)-\left (\Db^F \right )$ and $\Omega\cap B=K\cap B$.
\end{itemize}
\end{theorem}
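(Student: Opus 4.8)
The plan is to transfer everything to the language of sets via Proposition~\ref{prop:Strong_duality_via_Omega} and then run a short inclusion-chasing argument. Write $E:=\epi(f-g+\delta_A)^c\cap B$. By Proposition~\ref{prop:Strong_duality_via_Omega}, statement $i)$ is equivalent to $\Omega\cap B=E$, and strong duality for $(P)-(\Db^F)$ is equivalent to $K\cap B=E$; hence $ii)$ amounts to the conjunction ``$K\cap B=E$ and $\Omega\cap B=K\cap B$'', and the standing hypothesis \eqref{eq:problemPe} reads simply $E=\Lambda\cap B$. In this dictionary the implication $ii)\Rightarrow i)$ is immediate, since $K\cap B=E$ together with $\Omega\cap B=K\cap B$ force $\Omega\cap B=E$; so the whole content lies in $i)\Rightarrow ii)$.

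The argument rests on two inclusions that hold with no extra hypothesis. The first is $\Omega\cap B\subseteq K\cap B$: comparing \eqref{eq:Set_Omega} and \eqref{eq:Set_K}, the two sets are built from the same family $S_{(x^*,y^*,\alpha),(u^*,v^*,\gamma)}$, indexed by $(x^*,y^*,\alpha)\in\dom\delta_A^c$ and $(u^*,v^*,\gamma)\in\dom g^c$, and differ only in the order of $\bigcup_{\dom\delta_A^c}$ and $\bigcap_{\dom g^c}$, so the elementary relation $\bigcup_x\bigcap_u S_{x,u}\subseteq\bigcap_u\bigcup_x S_{x,u}$ gives $\Omega\subseteq K$. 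The second, and decisive, inclusion is $K\cap B\subseteq\Lambda\cap B$. Here I would e-convexify $g$: since $\eco g=g^{cc^\prime}$, Theorem~\ref{thm:Theorem_charac} yields $(\eco g)^c=g^{cc^\prime c}=g^c$, so both the set $K$ and the dual $(\Db^F)$ are unchanged when $g$ is replaced by its e-convex hull $\eco g$, while $\dom f\subseteq\dom g\subseteq\dom\eco g$ keeps the construction legitimate. As $\eco g$ is e-convex, \eqref{eq:Optimal_value_ineq} applied to the primal $(P_e)$ with objective $f-\eco g$ gives weak duality $v(P_e)\geq v(\Db^F)$; Proposition~\ref{prop:NC_for_WD}~$ii)$ for $(P_e)$ then translates this into $K\cap B\subseteq\epi(f-\eco g+\delta_A)^c\cap B$, which is exactly $K\cap B\subseteq\Lambda\cap B$ by Proposition~\ref{prop:Econv_formula}~$i)$.

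Granting these two inclusions, $i)\Rightarrow ii)$ is a one-line chase: assuming $\Omega\cap B=E$ one gets
\[
E=\Omega\cap B\subseteq K\cap B\subseteq\Lambda\cap B=E,
\]
where the last equality is \eqref{eq:problemPe}; hence $K\cap B=E=\Omega\cap B$, which is precisely $ii)$. The one genuinely delicate step is $K\cap B\subseteq\Lambda\cap B$: a direct verification is hampered by the half-space bookkeeping built into the coupling $c$ (the conditions $\ci x,y^*\cd<\alpha$ coming from $A\subseteq H_{y^*,\alpha}^-$ and those produced by $c(\cdot,(-x^*,-y^*,\alpha))$ do not align), and it is exactly to bypass this that I route the inclusion through the e-convexified problem $(P_e)$, where weak duality is already available. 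Everything else is formal bookkeeping with the previously established Propositions~\ref{prop:NC_for_WD}, \ref{prop:Strong_duality_via_Omega} and~\ref{prop:Econv_formula}.
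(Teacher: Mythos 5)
Your proof is correct, and its assembly of $i)\Rightarrow ii)$ takes a genuinely different (and shorter) route than the paper's. The paper also e-convexifies $g$ and exploits $(\eco g)^c=g^c$, but it then proceeds in two extra stages: first it proves the zero-gap equality $v(P)=v(P^e)$ for the e-convexified primal $(P^e)$ — this is where hypothesis \eqref{eq:problemPe} enters, via Lemma~\ref{lem:Lemma1} — and from it deduces $v(P)\geq v\left(\Db^F\right)$, hence $K\cap B\subseteq \Lambda\cap B$ by applying Proposition~\ref{prop:NC_for_WD}~$ii)$ to the \emph{original} pair together with \eqref{eq:problemPe}; second, it converts hypothesis $i)$ into the equality $\Lambda\cap B=\Omega\cap B$ by invoking the dedicated characterization of strong duality for $(P_0)-(D_0)$ (Proposition~\ref{thm:SD_for_P0_D0}, applied with $f-\eco g$ in place of $f$) combined with Proposition~\ref{prop:Econv_formula}. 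You bypass both stages: you read $i)$ directly as $\Omega\cap B=E$ via Proposition~\ref{prop:Strong_duality_via_Omega}~$i)$, and you obtain $K\cap B\subseteq\Lambda\cap B$ \emph{unconditionally} by applying Proposition~\ref{prop:NC_for_WD}~$ii)$ to the e-convexified pair, where weak duality \eqref{eq:Optimal_value_ineq} is automatic because $\eco g$ is e-convex and $K$, $\left(\Db^F\right)$ depend on $g$ only through $g^c$; the squeeze $E=\Omega\cap B\subseteq K\cap B\subseteq\Lambda\cap B=E$ then finishes. Your version is structurally cleaner: it makes visible that \eqref{eq:problemPe} is used exactly once, and it isolates the inclusion $K\cap B\subseteq\Lambda\cap B$ as valid whenever $\dom f\subseteq\dom g$, with no strong-duality assumption. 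What the paper's longer route buys are by-products of independent interest — the equality $v(P)=v(P^e)$ under \eqref{eq:problemPe} and the equivalence of strong duality for $(P)-(D^F)$ with that for $(P^e)-(D^e)$, the pattern that is then reused for the stable-duality results of Section~6. One caveat, shared with the paper rather than a gap in your argument: applying Lemma~\ref{lem:Lemma1} and Proposition~\ref{prop:NC_for_WD} with $\eco g$ in place of $g$ tacitly assumes that $\eco g=g^{cc^{\prime}}$ is proper, a point the paper glosses over in exactly the same way.
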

\begin{proof}
The implication $ii) \Rightarrow i)$ comes directly from Proposition \ref{prop:Strong_duality_via_Omega} $i)$ and \eqref{eq:problemPe} is not necessary.\\
$i) \Rightarrow ii)$ In first place, we will see which consequences equality \eqref{eq:problemPe} has.
Let us consider the primal problem
\begin{equation*}
	(P^e) ~~~ \inf_X \left\{ f-\eco g+\delta_A\right\}.
\end{equation*}
Let us observe that $\dom f \subseteq \dom g \subseteq \dom g^{cc'}$, and the function $f-\eco g$ is proper. Its Fenchel dual obtained following \cite{FVR2012} will be
\begin{equation*}	
	(D^e)~~~\sup_{\dom \delta_A^c}\left\{ -(f-\eco g)^c(-x^*,-y^*,\alpha)-\delta_A^c(x^*,y^*,\alpha)\right\}
\end{equation*}
and $v(P^e)\geq v(D^e)$. Due to the $\ep$-convexity of $g^c$, we can derive that $(\eco g)^c=(g^{cc'})^c=(g^c)^{c'c}=g^c$, and following along the lines of Section 3, it is possible to see that $(D^F)$ is equivalent to $(D^e)$, and $v(D^F)=v(D^e)$. Moreover, the dual problem for $(P^e)$ following the e-convexification technique in Section 3.1, named $(\Db^e)$, is equivalent to $ \left (\Db^F \right )$ and $v \left(\Db^F \right )= v \left (\Db^e \right )$.

Now, we will see that $v(P)=v(P^e)$. Clearly, since $\eco g\leq g$, $v(P^e)\geq v(P)$, but if $v(P^e)>v(P)$, we could find some $\beta\in\R$ such that
\begin{equation*}
	v(P)<-\beta\leq v(P^e).
\end{equation*}
Applying Lemma \ref{lem:Lemma1} $i)$, $(0,0,\delta,\beta)\in\epi(f-\eco g+\delta_A)^c$, for certain $\delta>0$, and by Proposition \ref{prop:Econv_formula}, the point $(0,0,\delta,\beta)\in\Lambda$. By equality \eqref{eq:problemPe}, $(0,0,\delta,\beta)\in\epi(f-g+\delta_A)^c$, so by Lemma \ref{lem:Lemma1} $i)$ $v(P)\geq-\beta$, which is a contradiction and, then, $v(P)=v(P^e)$. Now, because of the e-convexity of $\eco g$, we can derive that, from \eqref{eq:problemPe}, 
$$v(P) \geq v \left(\Db^F \right ) \geq v(D^F),$$
which means that weak duality holds for $(P)$ and $(\Db^F)$, equivalently, according to Proposition \ref{prop:NC_for_WD} and \eqref{eq:problemPe},
\begin{equation} \label{eq:inclusion_from_hip}
K\cap B \subseteq \Lambda \cap B.
\end{equation} 
Now, since strong duality between $(P)-(D^F)$ is equivalent to strong duality between $(P^e)-(D^e)$, using Theorem \ref{thm:SD_for_P0_D0}, we have
\begin{equation*}
\begin{aligned}	
	&\epi(f-\eco g+\delta_A)^c\cap B\\
	&= \bigcup_{\dom \delta_A^c} \Bigl \{ \epi\Bigl(f-\eco g-c(\cdot,(-x^*,-y^*,\alpha)) - \delta_A^c(x^*,y^*,\alpha)\Bigr)^c\cap B \Bigr \}
	\end{aligned}
\end{equation*}
and, due to Proposition \ref{prop:Econv_formula}, this means that
\begin{equation}\label{eq:omega_equal_lambda}
	\Lambda\cap B =\Omega\cap B. 
\end{equation}
Clearly, $\Omega \subseteq K$, then $\Omega \cap B \subseteq K \cap B$ and, from \eqref{eq:inclusion_from_hip} and \eqref{eq:omega_equal_lambda}, we have
\begin{equation*}
	\Lambda\cap B =\Omega\cap B=K\cap B, 
\end{equation*}
allowing us to conclude that $ii)$ is fulfilled, according to Proposition \ref{prop:Strong_duality_via_Omega} $ii)$ and \eqref{eq:problemPe}.
\end{proof}

Observe that equality \eqref{eq:problemPe} in Theorem \ref{thm:SD_mixed} does not imply that the function $g$ has to be e-convex in general.

\begin{example}
\label{ex:Condition_Th_SD_mixed}
Let $A=]0,+\infty[$ and the functions
\begin{equation*}
	f(x)=
	\left\{
	\begin{aligned}
		x+1, & ~~~x\geq 0,\\
		+\infty, &~~~ \text{otherwise;}
	\end{aligned}
	\right.
	\hspace{1cm}
	\text{and}
	\hspace{1cm}
	g(x)=
	\left\{
	\begin{aligned}
		1, & ~~~x= 0,\\
		x, & ~~~x>0,\\
		+\infty, & ~~~\text{otherwise.}
	\end{aligned}
	\right.
\end{equation*}
It is straightforward to see that $f$ is e-convex but $g$, though it is convex, is not e-convex according to Definition \ref{def:Econvex}. Given $x\in X$, if we calculate 
\begin{equation*}
	(f-g+\delta_A)(x)=(f-\eco g+\delta_A)(x)=
	\left\{
	\begin{aligned}
		1,&~~~x>0,\\
		+\infty,&~~~\text{otherwise,}
	\end{aligned}
	\right.
\end{equation*}
so, it is immediate to observe that 
\begin{equation*}
	\epi(f-\eco g+\delta_A)^c\cap B = \epi(f-g+\delta_A)^c\cap B,
\end{equation*}
although $g$ is not e-convex.
\end{example}

Next corollary comes directly from the above theorem and Proposition \ref{prop:Econv_formula}.

\begin{corollary}
Let us assume that the function $g$ in $(P)$ is e-convex. The following statements are equivalent:
\begin{itemize}
	\item[i)] There exists strong duality for $(P)-(D^F)$.
	\item[ii)] There exists strong duality for $(P)-\left(\Db^F \right )$ and $\Omega\cap B = K\cap B$.
\end{itemize}
\end{corollary}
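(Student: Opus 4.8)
The final statement is the corollary asserting that, when $g$ is e-convex, strong duality for $(P)-(D^F)$ is equivalent to the conjunction of strong duality for $(P)-\left(\Db^F\right)$ together with $\Omega\cap B = K\cap B$. The plan is to deduce this as a specialization of Theorem \ref{thm:SD_mixed}, whose conclusion is exactly this equivalence, \emph{provided} its standing hypothesis \eqref{eq:problemPe}, namely $\epi(f-g+\delta_A)^c\cap B = \Lambda\cap B$, is in force. So the entire task reduces to verifying that even convexity of $g$ guarantees \eqref{eq:problemPe}.

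First I would invoke Proposition \ref{prop:Econv_formula} $i)$, which gives the unconditional identity $\Lambda = \epi(f-\eco g+\delta_A)^c$. Hence \eqref{eq:problemPe} is equivalent to
\begin{equation*}
	\epi(f-g+\delta_A)^c\cap B = \epi(f-\eco g+\delta_A)^c\cap B.
\end{equation*}
Now the key observation is that if $g$ is e-convex, then by Theorem \ref{thm:Theorem_charac} $(iii)$ (applied to $g$, which does not take the value $-\infty$ since it is proper) one has $\eco g = g^{cc'} = g$. Therefore $f-\eco g+\delta_A = f-g+\delta_A$ identically, and the two epigraphs above coincide not merely after intersecting with $B$, but outright. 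This establishes hypothesis \eqref{eq:problemPe}.

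With \eqref{eq:problemPe} verified, the equivalence of $i)$ and $ii)$ is immediate from Theorem \ref{thm:SD_mixed}: its statement $i)$ (strong duality for $(P)-(D^F)$) and its statement $ii)$ (strong duality for $(P)-\left(\Db^F\right)$ together with $\Omega\cap B = K\cap B$) transcribe verbatim into the two items of the corollary. I would close by noting that the role of Proposition \ref{prop:Econv_formula} here is precisely to translate the abstract hypothesis of the theorem into the concrete even-convexity assumption on $g$, which is the cleaner and more checkable condition the corollary advertises.

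The main obstacle is conceptual rather than technical: one must recognize that the corollary is not a separate argument but a direct corollary obtained by discharging the hypothesis \eqref{eq:problemPe} of Theorem \ref{thm:SD_mixed}. The only genuine content is the reduction $\eco g = g$ under even convexity via Theorem \ref{thm:Theorem_charac} $(iii)$ and the identification $\Lambda = \epi(f-\eco g+\delta_A)^c$ from Proposition \ref{prop:Econv_formula} $i)$; once these are in place the proof is essentially a one-line appeal to the theorem. There is no heavy calculation to grind through, and the result follows routinely, as the text itself signals by calling it a corollary that comes ``directly'' from the theorem and Proposition \ref{prop:Econv_formula}.
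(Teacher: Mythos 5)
Your proposal is correct and takes essentially the same route as the paper: the corollary is exactly a specialization of Theorem \ref{thm:SD_mixed}, where the even convexity of the proper function $g$ gives $\eco g = g$ (via Theorem \ref{thm:Theorem_charac} or directly from the definition of the e-convex hull), so that Proposition \ref{prop:Econv_formula} $i)$ yields $\Lambda = \epi(f-\eco g+\delta_A)^c = \epi(f-g+\delta_A)^c$ and hypothesis \eqref{eq:problemPe} is discharged. The paper gives no further detail beyond saying the corollary ``comes directly from the above theorem and Proposition \ref{prop:Econv_formula}'', and your write-up supplies precisely that missing one-line verification.
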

\begin{remark}
As an additional comment, we point out {\normalfont{\cite{DVN2008}}} and {\normalfont{\cite{DNV2010}}}, where a constraint qualification called $(CC)$ is treated with the intention of finding regularity conditions for strong duality in DC problems. We leave as future research the analysis of the counterpart of this condition in our context when one of the functions is e-convex instead of convex and lsc. As it happens to be in the classical setting presented by Dihn et al. in the aforementioned works, this e-convex constraint qualification may be connected to those already developed in previous works like $(ECCQ)$ in {\normalfont{\cite{FVR2012}}} or $(C_{FL})$  in {\normalfont{\cite{FV2018}}}.
\end{remark}

\section{Stable Strong Duality}
Stable strong duality holds for a pair of primal and dual problem if strong duality holds for all the pairs of problems obtained by linearly perturbing the objective function of the primal problem and the corresponding dual problem.
If we perturbate the problem $(P)$, in the same way it was done in \cite{FV2016SSD}, taking $f(x)-g(x)$ as the objective function, we obtain, for each $p^* \in X^*$,

\begin{equation}
	\label{eq:Primal_pert_problem}
	\tag{${P}_{p^*}$}
		\begin{aligned}
		&\inf ~ f(x)-g(x)+ \langle x, p^* \rangle\\
		& \text{ s.t.} ~~~x\in A,
	\end{aligned}
\end{equation}
and its dual problem
\begin{equation*}
		\begin{aligned}
		\sup_{\substack{x^*,y^*\in X^*\\\alpha_1+\alpha_2>0}} \left\{ -(f-g)^c(-x^*-p^*,-y^*,\alpha_1) - \delta_A^c(x^*,y^*,\alpha_2)\right\}.
	\end{aligned}
\end{equation*}

Following the same steps as it was done in Section 3, we rewrite this dual  problem as
\begin{equation}
	\label{eq:Dual_pert_final}
	\tag{${D}_{p^*}^F$}
	\begin{aligned}
		\sup_{Z} ~\inf_{\dom g^c} \left\{ g^c(u^*,v^*,\gamma)-f^c(u^*-x^*-p^*,-y^*,\alpha) - \delta_A^c(x^*,y^*,\alpha)\right\}.
	\end{aligned}
\end{equation}

Applying the e-convexification technique also used in Subsection \ref{subsec:Econvexification}, we can obtain another dual problem for $(P_{p*})$ 
\begin{equation}\label{eq:dual_probl_pert_econ}
	\tag{${\Db}_{p^*}^F$}
	\inf_{\dom g^c} \sup_{Z} \left\{g^c(u^*,v^*,\gamma)-f^c(u^*-x^*-p^*,-y^*,\alpha)+-\delta_A^c(x^*,y^*,\alpha)\right\},
\end{equation}
verifying that, in case $g$ is e-convex, 
\begin{equation*}
	v(P_{p^*})\geq v \left({\Db}_{p^*}^F \right )  \geq v \left(D_{p^*}^F \right),
\end{equation*}
for all $p^* \in X^*$. In order to derive a characterization for stable strong duality, we need similar results to Lemma \ref{lem:Lemma1} and Proposition \ref{prop:NC_for_WD}, whose proofs will be analogous, taking, instead of the set $B$ in (\ref{eq:Set_B}), the following set, for each $p^* \in X^*$:
\begin{equation}
	\label{eq:Set_B_p}
	B_{p^*}:=\left\{ (-p^*,0)\times \R_{++}\times \R\right\} \subseteq X^* \times X^* \times \R\times \R.
\end{equation}

\begin{lemma}
\label{lem:Lemma2}
Let $p^*\in X^*$ and $\beta\in\R$. The following statements hold:
\begin{itemize}
	\item[i)] There exists $\delta>0$ such that $(-p^*,0,\delta,\beta)\in\epi(f-g+\delta_A)^c$ if and only if $v(P_{p^*})\geq-\beta$.
	\item[ii)] There exists $\delta>0$ such that $(-p^*,0,\delta,\beta)\in\Omega$ if and only if there exists $(x^*,y^*,\alpha)\in\dom\delta_A^c$ such that, for all $(u^*,v^*,\gamma)\in\dom g^c$,
	\begin{equation*}
		g^c(u^*,v^*,\gamma)-f^c(u^*-x^*-p^*,-y^*,\alpha)-\delta_A^c(x^*,y^*,\alpha)\geq-\beta.
	\end{equation*}
	\item[iii)]  There exists $\delta>0$ such that $(-p^*,0,\delta,\beta)\in K$, if and only if, for all $(x^*,y^*,\alpha)\in\dom\delta_A^c$, there exists $(u^*,v^*,\gamma)\in\dom g^c$ such that 
	\begin{equation*}
		g^c(u^*,v^*,\gamma)-f^c(u^*-x^*-p^*,-y^*,\alpha)-\delta_A^c(x^*,y^*,\alpha)\geq-\beta.
	\end{equation*}
\end{itemize}
\end{lemma}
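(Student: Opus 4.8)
The plan is to transcribe the proof of Lemma \ref{lem:Lemma1} almost verbatim, the sole structural change being that the tested point now carries first two coordinates $(-p^*,0)$ instead of $(0,0)$; this is exactly the data recorded by the translated set $B_{p^*}$ in \eqref{eq:Set_B_p}. The only genuinely new ingredient is the evaluation of the coupling function at $(-p^*,0,\delta)$: since the feasibility condition $\ci x,0\cd<\delta$ collapses to $0<\delta$ and is therefore met by every $x\in X$ as soon as $\delta>0$, one gets $c(x,(-p^*,0,\delta))=\ci x,-p^*\cd$ for all $x$. This is the mechanism that feeds the linear perturbation $\ci x,p^*\cd$ into all three equivalences.

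For i), I would expand the membership $(-p^*,0,\delta,\beta)\in\epi(f-g+\delta_A)^c$ with $\delta>0$, through the definition of the $c$-conjugate, into the pointwise inequality $c(x,(-p^*,0,\delta))-(f-g+\delta_A)(x)\le\beta$ for every $x\in X$. Substituting the evaluation above and rearranging turns this into $f(x)-g(x)+\ci x,p^*\cd+\delta_A(x)\ge-\beta$ for all $x$, i.e.\ into $v(P_{p^*})\ge-\beta$ by the definition of the perturbed primal \eqref{eq:Primal_pert_problem}. The observation recorded in Remark \ref{remark:forall}, that ``there exists $\delta>0$'' may equivalently be read as ``for all $\delta>0$'', is insensitive to the shift and carries over unchanged.

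For ii) and iii), I would unfold the membership of $(-p^*,0,\delta,\beta)$ in $\Omega$ (resp.\ $K$) straight from \eqref{eq:Set_Omega} (resp.\ \eqref{eq:Set_K}). Translating by the vector $(u^*,0,0,g^c(u^*,v^*,\gamma)-\delta_A^c(x^*,y^*,\alpha))$ turns the first coordinate into $u^*-p^*$, so the membership becomes equivalent to $\bigl(f-c(\cdot,(-x^*,-y^*,\alpha))\bigr)^c(u^*-p^*,0,\delta)\le\beta+g^c(u^*,v^*,\gamma)-\delta_A^c(x^*,y^*,\alpha)$. At this point I would reuse the very rewriting already carried out in the proof of Lemma \ref{lem:Lemma1}, now evaluated at the shifted first argument, namely $\bigl(f-c(\cdot,(-x^*,-y^*,\alpha))\bigr)^c(u^*-p^*,0,\delta)=f^c(u^*-x^*-p^*,-y^*,\alpha)$. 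Rearranging yields exactly the stated inequalities, while the quantifier pattern (``there exists $(x^*,y^*,\alpha)\in\dom\delta_A^c$ such that for all $(u^*,v^*,\gamma)\in\dom g^c$'' in ii), and its interchange in iii)) is inherited directly from the order of the union and intersection in the definitions of $\Omega$ and $K$.

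The one point demanding attention --- a mild bookkeeping matter rather than a real obstacle --- is tracking the shift $-p^*$ consistently through the conjugation identity in ii) and iii): one must confirm that translating the tested point in its first coordinate by $-p^*$ produces precisely the translation of the first argument of $f^c$ by the same $-p^*$, landing on $u^*-x^*-p^*$ and not on $u^*-x^*$ or on $u^*-p^*$ alone. Once this is verified, all three equivalences follow word for word from the argument given for Lemma \ref{lem:Lemma1}.
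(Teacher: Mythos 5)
Your proposal is correct and matches the paper exactly: the paper gives no separate proof of this lemma, stating only that it follows analogously to Lemma \ref{lem:Lemma1} with $B_{p^*}$ in place of $B$, and your transcription --- evaluating $c(x,(-p^*,0,\delta))=\ci x,-p^*\cd$ for $\delta>0$ to feed the perturbation $\ci x,p^*\cd$ into $v(P_{p^*})$, and tracking the shifted first argument through $\bigl(f-c(\cdot,(-x^*,-y^*,\alpha))\bigr)^c(u^*-p^*,0,\delta)=f^c(u^*-x^*-p^*,-y^*,\alpha)$ --- is precisely that intended argument, including the correct quantifier patterns inherited from the union/intersection order in $\Omega$ and $K$ and the carry-over of Remark \ref{remark:forall}.
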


\begin{proposition}
\label{prop:NC_for_WD_p}
Let $\Omega$ and $B_{p^*}$, for all $p^* \in X^*$, be the sets defined in \eqref{eq:Set_Omega} and \eqref{eq:Set_B_p}, respectively. Then: 
\begin{itemize}
\item [i)]$v(P_{p^*})\geq v(D_{p^*}^F)$ if and only if $\Omega\cap B_{p^*} \subseteq \epi(f-g+\delta_A)^c\cap B_{p^*}$.

\item[ii)] $v(P_{p^*})\geq v \left({\Db}_{p^*}^F \right)$ if and only if $K\cap B_{p^*}\subseteq \epi(f-g+\delta_A)^c\cap B_{p^*}$.
\end{itemize}

\end{proposition}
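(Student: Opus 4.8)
The plan is to mirror exactly the proof of Proposition \ref{prop:NC_for_WD}, replacing every appeal to Lemma \ref{lem:Lemma1} by the corresponding part of Lemma \ref{lem:Lemma2} and replacing the set $B$ by $B_{p^*}$. Since the characterizations in Lemma \ref{lem:Lemma2} already absorb the perturbation $p^*$ (the shift appears inside $f^c$ as $u^*-x^*-p^*$, and the base point in each set-membership condition is now $(-p^*,0,\delta,\beta)$ rather than $(0,0,\delta,\beta)$), the structural argument is unchanged. The only thing that genuinely needs checking is that membership of a point with first component $-p^*$ in $\Omega$, $K$ or $\epi(f-g+\delta_A)^c$ is faithfully captured by parts $ii)$, $iii)$ and $i)$ of Lemma \ref{lem:Lemma2}, respectively; but this is precisely what that lemma asserts.

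For part $i)$, I would first prove the direct implication. Assume $v(P_{p^*})\geq v(D_{p^*}^F)$ and take $(-p^*,0,\delta,\beta)\in\Omega$ with $\delta>0$. By Lemma \ref{lem:Lemma2} $ii)$ there exists $(x^*,y^*,\alpha)\in\dom\delta_A^c$ such that, for all $(u^*,v^*,\gamma)\in\dom g^c$,
\begin{equation*}
	g^c(u^*,v^*,\gamma)-f^c(u^*-x^*-p^*,-y^*,\alpha)-\delta_A^c(x^*,y^*,\alpha)\geq-\beta,
\end{equation*}
whence $v(D_{p^*}^F)\geq-\beta$ and therefore $v(P_{p^*})\geq-\beta$. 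Lemma \ref{lem:Lemma2} $i)$ then gives $(-p^*,0,\delta,\beta)\in\epi(f-g+\delta_A)^c$, so the point lies in $\epi(f-g+\delta_A)^c\cap B_{p^*}$. For the converse I would argue by contradiction exactly as in Proposition \ref{prop:NC_for_WD}: if $v(P_{p^*})<v(D_{p^*}^F)$, pick $\beta\in\R$ with $v(P_{p^*})<-\beta<v(D_{p^*}^F)$; the strict inequality $v(D_{p^*}^F)>-\beta$ supplies, via Lemma \ref{lem:Lemma2} $ii)$, a point placing $(-p^*,0,\delta,\beta)\in\Omega\cap B_{p^*}$ for all $\delta>0$; the assumed inclusion forces it into $\epi(f-g+\delta_A)^c$, and Lemma \ref{lem:Lemma2} $i)$ then yields $v(P_{p^*})\geq-\beta$, the desired contradiction.

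The proof of part $ii)$ is identical word for word, using Lemma \ref{lem:Lemma2} $iii)$ in place of $ii)$ and the set $K$ in place of $\Omega$; the quantifier pattern (``for all $(x^*,y^*,\alpha)$ there exists $(u^*,v^*,\gamma)$'') still produces the bound $v(D_{p^*}^F)\geq-\beta$ needed to run both implications. I would therefore simply write out $i)$ in full and remark that $ii)$ follows the same steps, as the authors do throughout the paper.

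I do not expect a genuine obstacle here, since this proposition is stated precisely as the perturbed analogue of Proposition \ref{prop:NC_for_WD} and the supporting Lemma \ref{lem:Lemma2} has already been packaged to make the transfer mechanical. The one point deserving a moment's care is purely notational: one must confirm that $B_{p^*}$ from \eqref{eq:Set_B_p} selects exactly the points whose first two components are $(-p^*,0)$ and whose third is positive, so that intersecting $\Omega$ (resp.\ $K$, resp.\ $\epi(f-g+\delta_A)^c$) with $B_{p^*}$ isolates the points to which Lemma \ref{lem:Lemma2} applies. Once that bookkeeping is in place, the equivalence is immediate.
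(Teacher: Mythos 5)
Your proposal is correct and takes essentially the same approach as the paper: the paper gives no separate proof of this proposition, stating only that it follows by repeating the argument of Proposition \ref{prop:NC_for_WD} with Lemma \ref{lem:Lemma1} replaced by Lemma \ref{lem:Lemma2} and $B$ replaced by $B_{p^*}$, which is exactly what you write out. The only blemish is notational: in your remark on part $ii)$ the bound obtained from Lemma \ref{lem:Lemma2} $iii)$ should be $v\left(\Db_{p^*}^F\right)\geq-\beta$ rather than $v\left(D_{p^*}^F\right)\geq-\beta$, since that part concerns the dual $\left(\Db_{p^*}^F\right)$.
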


Both previous results lead us to the following chacterization of stable strong duality, whose proof is similar to that one from Proposition \ref{prop:Strong_duality_via_Omega}.

\begin{proposition}
\label{prop:Stable_Strong_duality_via_Omega}
Let $\Omega$, $K$ and $B_{p^*}$, for all $p^* \in X^*$,  be the sets defined by \eqref{eq:Set_Omega}, \eqref{eq:Set_K} and \eqref{eq:Set_B_p}  respectively. Then
\begin{itemize}
\item[i)] Strong duality for $(P_{p^*})$ and $(D_{p^*}^F)$ holds if and only if 
\begin{equation*}	
	\Omega\cap B_{p^*}  = \epi(f-g+\delta_A)^c\cap B_{p^*}.
\end{equation*}
\item[ii)] Strong duality for $(P_{p^*})-({\Db}_{p^*}^F)$ holds if and only if 
	\begin{equation*}
		K\cap B_{p^*}=\epi(f-g+\delta_A)^c\cap B_{p^*}.
	\end{equation*}
\end{itemize}

\end{proposition}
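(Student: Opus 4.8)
The plan is to transcribe, line by line, the proof of Proposition~\ref{prop:Strong_duality_via_Omega}, replacing the unperturbed data by its perturbed counterparts: $v(P)$ becomes $v(P_{p^*})$, the set $B$ becomes $B_{p^*}$ from \eqref{eq:Set_B_p}, the base point $(0,0,\delta,\beta)$ becomes $(-p^*,0,\delta,\beta)$, and Lemma~\ref{lem:Lemma1} and Proposition~\ref{prop:NC_for_WD} are replaced by their perturbed analogues, Lemma~\ref{lem:Lemma2} and Proposition~\ref{prop:NC_for_WD_p}. For part $i)$ I would first assume strong duality for $(P_{p^*})-(D_{p^*}^F)$; this gives in particular $v(P_{p^*})\geq v(D_{p^*}^F)$, so by Proposition~\ref{prop:NC_for_WD_p}~$i)$ the inclusion $\Omega\cap B_{p^*}\subseteq\epi(f-g+\delta_A)^c\cap B_{p^*}$ holds for free. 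For the reverse inclusion I would take $(-p^*,0,\delta,\beta)\in\epi(f-g+\delta_A)^c\cap B_{p^*}$ with $\delta>0$, use Lemma~\ref{lem:Lemma2}~$i)$ to obtain $v(P_{p^*})\geq-\beta$, and then feed the dual optimal solution $(x_0^*,y_0^*,\alpha_0)\in\dom\delta_A^c$ (provided by solvability) into Lemma~\ref{lem:Lemma2}~$ii)$ to conclude $(-p^*,0,\delta,\beta)\in\Omega\cap B_{p^*}$.

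For the converse I would argue as follows. The assumed equality $\Omega\cap B_{p^*}=\epi(f-g+\delta_A)^c\cap B_{p^*}$ yields the inclusion, hence weak duality $v(P_{p^*})\geq v(D_{p^*}^F)$ via Proposition~\ref{prop:NC_for_WD_p}~$i)$. The case $v(P_{p^*})=-\infty$ then forces $v(D_{p^*}^F)=-\infty$ and strong duality holds trivially. If $v(P_{p^*})=-\beta\in\R$, Lemma~\ref{lem:Lemma2}~$i)$ places $(-p^*,0,\delta,\beta)$ in $\epi(f-g+\delta_A)^c\cap B_{p^*}$ for every $\delta>0$; the equality moves this point into $\Omega$, and Lemma~\ref{lem:Lemma2}~$ii)$ produces $(\xb^*,\yb^*,\alphab)\in\dom\delta_A^c$ realizing $v(D_{p^*}^F)\geq-\beta=v(P_{p^*})$. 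Combined with weak duality this gives $v(P_{p^*})=v(D_{p^*}^F)$ with the supremum attained, i.e.\ strong duality. Part $ii)$ is then identical after replacing $\Omega$ by $K$, Lemma~\ref{lem:Lemma2}~$ii)$ by $iii)$, and Proposition~\ref{prop:NC_for_WD_p}~$i)$ by $ii)$.

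The one genuinely new point, should one wish to route the converse through a zero-duality-gap step exactly as in Proposition~\ref{prop:Strong_duality_via_Omega} (invoking the $\ep$-convexity argument behind Proposition~\ref{prop:ZDG}), is to verify that $\epi(f-g+\delta_A)^c\cap B_{p^*}$ is itself $\ep$-convex. By Lemma~\ref{lem:Lemma2}~$i)$ this set equals $\{(-p^*,0,\delta,\beta):\delta>0,\ \beta\geq-v(P_{p^*})\}$, and the difficulty is that the first coordinate must now be pinned at $-p^*$ rather than at $0$. I would handle this by expressing it as
$$\epi(f-g+\delta_A)^c\cap B_{p^*}=\bigcap_{x\in X}\epi\Bigl(c^\prime(\cdot,x)-\bigl(\ci x,-p^*\cd+v(P_{p^*})\bigr)\Bigr),$$
where the shift $\ci x,-p^*\cd$ inside each $c^\prime$-elementary function forces $x^*=-p^*$, $y^*=0$ and $\alpha>0$ when the intersection is formed; being an intersection of epigraphs of $c^\prime$-elementary functions, the set is $\ep$-convex. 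I expect this bookkeeping with the perturbation shift to be the main (and essentially only) obstacle, everything else being a faithful copy of the unperturbed proof; note that the direct argument sketched in the previous paragraph sidesteps it entirely, since there weak duality plus Lemma~\ref{lem:Lemma2} already deliver both zero duality gap and solvability at once.
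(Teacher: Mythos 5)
Your transcription is exactly the proof the paper intends: the paper omits the argument with the remark that it is ``similar to that one from Proposition~\ref{prop:Strong_duality_via_Omega}'', and your line-by-line substitution of Lemma~\ref{lem:Lemma2} and Proposition~\ref{prop:NC_for_WD_p} for their unperturbed counterparts, together with the verification that $\epi(f-g+\delta_A)^c\cap B_{p^*}=\bigcap_{x\in X}\epi\bigl(c^\prime(\cdot,x)-\ci x,-p^*\cd-v(P_{p^*})\bigr)$ is $\ep$-convex (the only genuinely new detail, since the first coordinate is pinned at $-p^*$), is precisely what that remark requires and is correct. Your closing observation is also sound: the direct route, in which Lemma~\ref{lem:Lemma2}~$ii)$ applied to $(-p^*,0,\delta,\beta)$ delivers zero duality gap and dual solvability in one stroke, makes the detour through Proposition~\ref{prop:ZDG} and the $\ep$-convexity argument dispensable.
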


Denoting by
\begin{equation} \label{eq:Bstar}
 B^*:=X^* \times \{0\}\times \R_{++}\times \R \subseteq X^* \times X^* \times \R\times \R,
 \end{equation}
 next corollary is inmediate.

\begin{corollary}
Stable strong duality holds for $(P)-(D^F)$ \normalfont{(}and $(P)-(\Db^F)$, respectively\normalfont{)} if and only if  
\begin{align*}
	\Omega\cap B^* &= \epi(f-g+\delta_A)^c\cap B^*,\\
	(K\cap B^* &= \epi(f-g+\delta_A)^c\cap B^*, \textit{respectively}).
\end{align*} 
\end{corollary}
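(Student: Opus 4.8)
The plan is to reduce the corollary to the pointwise (fixed $p^*$) characterizations already established, and then glue the fibers together via the union-over-$p^*$ structure of $B^*$. First I would observe from the definitions \eqref{eq:Set_B_p} and \eqref{eq:Bstar} that $B^*$ decomposes as a disjoint union over the perturbation directions, namely
\begin{equation*}
	B^* = \bigcup_{p^* \in X^*} B_{p^*},
\end{equation*}
since $B_{p^*}$ fixes the first coordinate at $-p^*$, the second at $0$, the third in $\R_{++}$ and the fourth in $\R$, and as $p^*$ ranges over $X^*$ the point $-p^*$ ranges over all of $X^*$. Intersecting any set $S \subseteq X^*\times X^*\times\R\times\R$ with $B^*$ therefore yields $S\cap B^* = \bigcup_{p^*\in X^*}(S\cap B_{p^*})$, and because the $B_{p^*}$ are pairwise disjoint (they differ in their first coordinate), set equality on $B^*$ is equivalent to set equality on each fiber $B_{p^*}$ separately.

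Next I would invoke the definition of stable strong duality stated at the opening of Section~6: stable strong duality for $(P)-(D^F)$ means that strong duality holds for the perturbed pair $(P_{p^*})-(D_{p^*}^F)$ \emph{for every} $p^*\in X^*$. By Proposition~\ref{prop:Stable_Strong_duality_via_Omega}~$i)$, each such perturbed strong duality is equivalent to the fiberwise equality
\begin{equation*}
	\Omega\cap B_{p^*} = \epi(f-g+\delta_A)^c\cap B_{p^*}.
\end{equation*}
Thus stable strong duality holds if and only if this equality holds simultaneously for all $p^*\in X^*$. Combining this with the fiber-decomposition observation above, the conjunction over all $p^*$ of the fiberwise equalities is precisely the single global equality $\Omega\cap B^* = \epi(f-g+\delta_A)^c\cap B^*$. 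The argument for the $(P)-(\Db^F)$ case is identical, replacing $\Omega$ by $K$ and citing Proposition~\ref{prop:Stable_Strong_duality_via_Omega}~$ii)$.

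The main obstacle I anticipate is not conceptual but a matter of making the fiber decomposition airtight: I must verify that the two sets $\Omega$ and $\epi(f-g+\delta_A)^c$ are genuinely determined fiber-by-fiber on $B^*$, i.e.\ that no point of $B^*$ lies in more than one fiber and that intersecting with $B^*$ commutes with the union in the way claimed. This is immediate once one notes that distinct $p^*$ force distinct first coordinates $-p^*$, so the $B_{p^*}$ are disjoint and their union is exactly $B^*$; hence a universally quantified family of fiberwise equalities is logically equivalent to the single equality on the union. Given that the heavy lifting has been carried out in Proposition~\ref{prop:Stable_Strong_duality_via_Omega}, the corollary then follows directly, which is why it is stated as being ``inmediate.''
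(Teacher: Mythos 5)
Your proof is correct and is exactly the argument the paper intends when it declares the corollary ``inmediate'': since $B^* = \bigcup_{p^*\in X^*} B_{p^*}$ (with $-p^*$ ranging over all of $X^*$), the global equality on $B^*$ is equivalent to the family of fiberwise equalities, each of which Proposition~\ref{prop:Stable_Strong_duality_via_Omega} identifies with strong duality for the corresponding perturbed pair. Note that disjointness of the fibers is not even needed: for the forward direction one simply intersects both sides with $B_{p^*}\subseteq B^*$, and for the converse one takes unions, so your anticipated ``obstacle'' dissolves entirely.
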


Theorem \ref{thm:SD_mixed} can also be extended to stable strong duality. For its proof, which follows similar steps, it is needed the next characterization of stable strong duality for $(P_0)-(D_0)$, which can be shown in a similar way than strong duality was in Theorem \ref{thm:SD_for_P0_D0}.

\begin{theorem}
Let $B^*$ be the set defined in \eqref{eq:Bstar}. There exists stable strong duality for $(P_0)-(D_0)$ if and only if
\begin{equation*}
	\epi(f+\delta_A)^c\cap B^* =\bigcup_{\dom \delta_A^c}\Bigl \{ \epi\Bigl(f-c(\cdot,(-x^*,-y^*,\alpha))-\delta_A^c(x^*,y^*,\alpha)\Bigr)^c\cap B^* \Bigr \}.
\end{equation*}
\end{theorem}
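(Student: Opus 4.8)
The plan is to reduce the claim to the family of (non-stable) strong duality characterizations for each perturbed pair and then glue these together fiberwise along $B^*$. By definition, stable strong duality for $(P_0)-(D_0)$ means that strong duality holds for every perturbed pair $(P_{0,p^*})-(D_{0,p^*})$, $p^*\in X^*$, where $(P_{0,p^*})$ is $(P_0)$ with objective $f+\langle\cdot,p^*\rangle+\delta_A$ and $(D_{0,p^*})$ is its Fenchel $c$-conjugate dual. First I would record the $g\equiv 0$ specialization of Lemma~\ref{lem:Lemma2}~$i)$, namely that for $\delta>0$ the membership $(-p^*,0,\delta,\beta)\in\epi(f+\delta_A)^c$ is equivalent to $v(P_{0,p^*})\geq-\beta$; this is just the computation $c(x,(-p^*,0,\delta))=-\langle x,p^*\rangle$ since the half-space constraint $\langle x,0\rangle<\delta$ is vacuous when $\delta>0$. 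With this in hand, replacing $B$ by $B_{p^*}$ throughout the argument of Proposition~\ref{thm:SD_for_P0_D0} yields, for each fixed $p^*$, the perturbed characterization: strong duality for $(P_{0,p^*})-(D_{0,p^*})$ holds if and only if
\begin{equation*}
\epi(f+\delta_A)^c\cap B_{p^*}=\bigcup_{\dom\delta_A^c}\Bigl\{\epi\Bigl(f-c(\cdot,(-x^*,-y^*,\alpha))-\delta_A^c(x^*,y^*,\alpha)\Bigr)^c\cap B_{p^*}\Bigr\}.
\end{equation*}

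I would then check that both implications of this perturbed equality go through verbatim as in Proposition~\ref{thm:SD_for_P0_D0}. The direction assuming the fiberwise equality produces, from $v(P_{0,p^*})=-\beta$, an optimal dual triple $(\xb^*,\yb^*,\alphab)$ realizing $-f^c(-\xb^*-p^*,-\yb^*,\alphab)-\delta_A^c(\xb^*,\yb^*,\alphab)\geq-\beta$, hence solvability; the converse assumes strong duality and establishes the two inclusions, the nontrivial one reconstructing a point of the right-hand union from an optimal dual solution. The only modification relative to the unperturbed proof is that the coupling evaluated at the first coordinate $-p^*$ contributes the linear term $-\langle\cdot,p^*\rangle$, which is exactly the perturbation carried by $(P_{0,p^*})$.

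Next I would perform the gluing, which is the actual content of the statement. The key observation is that $B^*=X^*\times\{0\}\times\R_{++}\times\R=\bigcup_{p^*\in X^*}B_{p^*}$, and that the fibers $B_{p^*}=\{-p^*\}\times\{0\}\times\R_{++}\times\R$ are pairwise disjoint, being distinguished by their first coordinate. Consequently, for any set $S\subseteq X^*\times X^*\times\R\times\R$, the intersection $S\cap B^*$ is completely determined fiber by fiber, with $(S\cap B^*)\cap B_{p^*}=S\cap B_{p^*}$. Applying this to both sides of the claimed equality shows that the single set equality over $B^*$ holds if and only if the whole family of fiberwise equalities over $B_{p^*}$, $p^*\in X^*$, holds. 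Combining this with the perturbed characterization from the previous step, the equality over $B^*$ is equivalent to strong duality for $(P_{0,p^*})-(D_{0,p^*})$ for every $p^*$, which is precisely stable strong duality for $(P_0)-(D_0)$.

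The main obstacle I anticipate is not any single computation but the careful bookkeeping of this fibering argument: one must justify rigorously that the set equality over the whole slab $B^*$ decouples into the independent fiberwise equalities, rather than merely following from them. This rests on the saturation property that both sides, when intersected with $B^*$, are determined slice by slice in the first coordinate; once this is made precise the equivalence is immediate, and the perturbed version of Proposition~\ref{thm:SD_for_P0_D0} supplies all remaining content.
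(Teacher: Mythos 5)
Your proposal is correct and follows essentially the route the paper intends: the paper gives no explicit proof, stating only that the result ``can be shown in a similar way'' to Proposition~\ref{thm:SD_for_P0_D0}, and your argument is precisely that adaptation --- the perturbed analogue of the unperturbed characterization on each slice $B_{p^*}$ (with the coupling contributing the linear term $-\langle\cdot,p^*\rangle$, exactly as in Lemma~\ref{lem:Lemma2} and Proposition~\ref{prop:Stable_Strong_duality_via_Omega}), glued over $B^*=\bigcup_{p^*\in X^*}B_{p^*}$ just as the paper passes from Proposition~\ref{prop:Stable_Strong_duality_via_Omega} to its corollary. Your only superfluous worry is the ``saturation'' of the fibering: disjointness of the fibers is not even needed, since intersecting the global equality with $B_{p^*}\subseteq B^*$ yields each fiberwise equality, and taking the union over the cover recovers the global one.
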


Now, we extend Theorem \ref{thm:SD_mixed}.
\begin{theorem}
Let $B^*$ be the set defined in \eqref{eq:Bstar} and $\Lambda$ defined in \eqref{eq:Set_Xi}. Assume that $\epi(f-g+\delta_A)^c\cap B^* = \Lambda\cap B^*$. Then, the following statements are equivalent:
\begin{itemize}
	\item[i)] There exists stable strong duality for $(P)-(D^F)$.
	\item[ii)] There exists stable strong duality for $(P)- \left(\Db^F \right)$ and $\Omega\cap B^*=K\cap B^*$.
\end{itemize}
\end{theorem}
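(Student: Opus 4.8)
The plan is to transcribe the proof of Theorem~\ref{thm:SD_mixed}, replacing the base set $B$ by $B^*$ so that every set identity is read simultaneously over all perturbations $p^*\in X^*$; recall that the corollary preceding this statement characterizes stable strong duality for $(P)-(D^F)$ and for $(P)-\left(\Db^F\right)$ as $\Omega\cap B^*=\epi(f-g+\delta_A)^c\cap B^*$ and $K\cap B^*=\epi(f-g+\delta_A)^c\cap B^*$, respectively. As in Theorem~\ref{thm:SD_mixed}, the implication $ii)\Rightarrow i)$ should be immediate and require no hypothesis: stable strong duality for $(P)-\left(\Db^F\right)$ gives $K\cap B^*=\epi(f-g+\delta_A)^c\cap B^*$, and together with $\Omega\cap B^*=K\cap B^*$ this yields $\Omega\cap B^*=\epi(f-g+\delta_A)^c\cap B^*$, which is exactly stable strong duality for $(P)-(D^F)$.

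For $i)\Rightarrow ii)$ I would first set up the e-convexified primal $(P^e)$, its Fenchel dual $(D^e)$ and its e-convexification dual $(\Db^e)$, together with their linear perturbations $(P^e_{p^*})$, $(D^e_{p^*})$, $(\Db^e_{p^*})$. Since $\dom f\subseteq\dom g\subseteq\dom g^{cc^\prime}$, the function $f-\eco g$ is proper; moreover, the $\ep$-convexity of $g^c$ gives $(\eco g)^c=(g^{cc^\prime})^c=g^c$, exactly as in Theorem~\ref{thm:SD_mixed}. This lets me check that each perturbed dual $(D^F_{p^*})$ coincides with the dual of $(P^e_{p^*})$, and likewise $({\Db}_{p^*}^F)$ with $(\Db^e_{p^*})$, so the corresponding optimal values agree for every $p^*$. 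Consequently stable strong duality for $(P)-(D^F)$ is equivalent to stable strong duality for $(P^e)-(D^e)$, once we know that $v(P_{p^*})=v(P^e_{p^*})$ for all $p^*$.

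To obtain this value identity I would use Lemma~\ref{lem:Lemma2}~$i)$ for $f-g$ together with its verbatim counterpart for the proper function $f-\eco g$: these read membership of a point $(-p^*,0,\delta,\beta)$ in $\epi(f-g+\delta_A)^c$, respectively in $\epi(f-\eco g+\delta_A)^c$, as the inequalities $v(P_{p^*})\geq-\beta$, respectively $v(P^e_{p^*})\geq-\beta$. Since Proposition~\ref{prop:Econv_formula}~$i)$ identifies $\Lambda=\epi(f-\eco g+\delta_A)^c$ and every point of $B^*$ has the form $(-p^*,0,\delta,\beta)$, the hypothesis $\epi(f-g+\delta_A)^c\cap B^*=\Lambda\cap B^*$ forces $v(P_{p^*})\geq-\beta\Leftrightarrow v(P^e_{p^*})\geq-\beta$ for all $\beta$, whence $v(P_{p^*})=v(P^e_{p^*})$ for every $p^*$. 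Applying the stable version of Theorem~\ref{thm:SD_for_P0_D0} established above to $f-\eco g$ in place of $f$, and reading its two sides through Proposition~\ref{prop:Econv_formula}~$i)$ and $ii)$, turns stable strong duality for $(P^e)-(D^e)$ into the equality $\Lambda\cap B^*=\Omega\cap B^*$.

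It remains to incorporate $K$. Because $\eco g$ is e-convex, inequality~\eqref{eq:Optimal_value_ineq} holds for each perturbation of $(P^e)$, and through the value identifications above this gives $v(P_{p^*})\geq v\left({\Db}_{p^*}^F\right)\geq v\left(D_{p^*}^F\right)$ for all $p^*$; the resulting weak duality for $(P_{p^*})-\left({\Db}_{p^*}^F\right)$, combined with Proposition~\ref{prop:NC_for_WD_p}~$ii)$ and the hypothesis, yields $K\cap B^*\subseteq\epi(f-g+\delta_A)^c\cap B^*=\Lambda\cap B^*$. Using the always-valid inclusion $\Omega\subseteq K$ together with $\Lambda\cap B^*=\Omega\cap B^*$ produces the chain $\Omega\cap B^*\subseteq K\cap B^*\subseteq\Lambda\cap B^*=\Omega\cap B^*$, so all three sets coincide. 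This delivers $\Omega\cap B^*=K\cap B^*$ and, via the preceding corollary, stable strong duality for $(P)-\left(\Db^F\right)$, completing $ii)$. The main obstacle is bookkeeping rather than a new idea: one must check that every ingredient of Theorem~\ref{thm:SD_mixed} survives \emph{uniformly} in $p^*$, in particular that the equivalence of the perturbed dual problems and the value identity $v(P_{p^*})=v(P^e_{p^*})$ hold for all $p^*$ at once when $B$ is replaced by $B^*$.
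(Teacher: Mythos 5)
Your proposal is correct and follows exactly the route the paper intends: the paper only sketches this proof, stating that it ``follows similar steps'' to Theorem~\ref{thm:SD_mixed} once the stable version of Theorem~\ref{thm:SD_for_P0_D0} and the perturbed tools (Lemma~\ref{lem:Lemma2}, Proposition~\ref{prop:NC_for_WD_p}, the sets $B_{p^*}$ and $B^*$) are available, and your write-up carries out precisely that transcription, including the value identity $v(P_{p^*})=v(P^e_{p^*})$ via the hypothesis and Proposition~\ref{prop:Econv_formula}, and the chain $\Omega\cap B^*\subseteq K\cap B^*\subseteq\Lambda\cap B^*=\Omega\cap B^*$. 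No gaps; your uniform-in-$p^*$ bookkeeping is exactly the content the authors left implicit.
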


\section{Conclusions}

In this paper we have derived a new Fenchel dual problem for a DC optimization primal one, $\inf_A(f-g)$, which does not necessarily verify weak duality in a general case. We have obtained necessary and sufficient conditions ensuring not only weak duality, but also strong duality. These conditions are expressed in terms of epigraphs and conjugate functions, so they belong to the class of closedness-type regularity conditions. We also develop an additional dual problem via a convexification technique whose optimal value, when the function $g$ is e-convex, ends up being bounded by the one of the primal and the first Fenchel dual problem that was obtained. Finally, we characterize not only the zero duality gap, but also strong and stable strong dualities for both dual pairs of optimization problems.

As future research we could study deeper properties of these regularity conditions writing down the problem from a general perspective via a perturbation function and following the approach by \cite{ET1976}. This could open new regularity conditions like the ones appearing in \cite{F2015} for Fenchel duality, \cite{FVR2016} for Lagrange duality or \cite{FV2018} for Fenchel-Lagrange dualities, which clearly goes beyond the scope of the current manuscript and may deserve to be studied.


\section*{Disclosure statement}

The authors declare that they have no conflict of interest.

\section*{Funding}

Research partially supported by MICIIN of Spain, Grant AICO/2021/165.

\bibliographystyle{plain}
\bibliography{biblio.bib}

\end{document}